\documentclass{louloupart1}
\usepackage{newtxtext,newtxmath}
\usepackage{pinlabel}
\usepackage{graphicx}
\usepackage[all]{xy}

\title[A cube complex for virtual Artin groups]{A cube complex for virtual Artin groups}
\author[J. G\'alvez Mateos]{José G\'alvez Mateos}
\givenname{Jos\'e}
\surname{G\'alvez Mateos}
\address{Jos\'e G\'alvez Mateos, Instituto de Matem\'aticas de la Universidad de Sevilla (IMUS) and Departamento de \'Algebra, Universidad de Sevilla, Spain.}
\email{jgalvez7@us.es}
\author[L. Paris]{Luis Paris}
\givenname{Luis}
\surname{Paris}
\address{Luis Paris, Universit\'e Bourgogne Europe, CNRS, IMB, UMR 5584, 21000 Dijon, France.}
\email{lparis@u-bourgogne.fr}

\newtheorem{thm}{Theorem}[section]
\newtheorem{lem}[thm]{Lemma}
\newtheorem{prop}[thm]{Proposition}
\newtheorem{corl}[thm]{Corollary}

\theoremstyle{definition}

\theoremstyle{definition2}
\newtheorem*{acknow}{Acknowledgments}
\newtheorem*{defin}{Definition}
\newtheorem*{rem}{Remark}

\numberwithin{equation}{section}

\makeatletter
\renewcommand{\thefigure}{\ifnum \c@section>\z@ \thesection.\fi
 \@arabic\c@figure}
\@addtoreset{figure}{section}
\makeatother

\begin{document}

\def\N{\mathbb N} \def\SS{\mathcal S} \def\TT{\mathcal T}
\def\VA{{\rm VA}} \def\KVA{{\rm KVA}} \def\PP{\mathcal P}
\def\DD{\mathcal D} \def\sph{{\rm sph}} \def\free{{\rm free}}
\def\Link{{\rm Link}} \def\EE{\mathcal E} \def\CC{\mathcal C}
\def\Cub{{\rm Cub}} \def\Fac{{\rm Fac}} \def\UU{\mathcal U}
\def\GG{\mathcal G} \def\id{{\rm id}} \def\LL{\mathcal L}
\def\E{\mathbb E} \def\VArt{{\rm VArt}} \def\R{\mathbb R}
\def\GL{{\rm GL}} \def\AA{\mathcal A} \def\XX{\mathcal X}
\def\BB{\mathcal B}


\begin{abstract}
Let $\Gamma$ be a Coxeter graph, and let $S$ be its vertex set.
An elemental complex over $\Gamma$ is an abstract simplicial complex $\DD$ on $S$ that contains every pair $\{s, t\}$ with $s \neq t$ and $m_{s,t} \neq \infty$.
To each elemental complex $\DD$, we associate a cube complex $\Xi_\DD$ on which the virtual Artin group $\VA [\Gamma]$ acts.
We prove that $\Xi_\DD$ is connected, simply connected, and locally regular; that the action of $\VA[\Gamma]$ on $\Xi_\DD$ is regular, cocompact, and by isometries; and that the stabilizers of cells are parabolic subgroups.
We finally prove that $\Xi_\DD$ is CAT(0) if and only if $\DD$ is a flag complex.

\smallskip\noindent
{\bf AMS Subject Classification.\ \ } 
Primary: 20F36. Secondary: 20F65.

\smallskip\noindent
{\bf Keywords.\ \ } 
Virtual Artin groups; cube complexes; CAT(0) spaces.

\end{abstract}

\maketitle


\section{Introduction}\label{sec1}

Let $S$ be a finite set.
A \emph{Coxeter matrix} on $S$ is a symmetric square matrix $M=(m_{s,t})_{s,t \in S}$, indexed by the elements of $S$, such that $m_{s,s}=1$ for every $s \in S$, and $m_{s,t} = m_{t,s} \in \N_{\ge 2} \cup \{ \infty\}$ for all distinct $s,t \in S$.
Such a matrix is usually represented by a labeled graph $\Gamma$, called the \emph{Coxeter graph} of $M$.
This graph is defined as follows.
The vertex set is $S$; two distinct vertices $s,t \in S$ are connected by an edge if $m_{s,t} \ge 3$, and this edge is labeled by $m_{s,t}$ whenever $m_{s,t} \ge 4$.
From now on, we fix a Coxeter graph $\Gamma$, and we denote by $M = (m_{s,t})_{s,t \in S}$ its associated Coxeter matrix.

Given two letters $a$ and $b$, and an integer $m \ge 2$, we denote by $(a,b)_m$ the alternating product $\cdots bab$ of length $m$.
The \emph{Artin group} of $\Gamma$, denoted by $A [\Gamma]$, is defined by the following presentation
\[
A [\Gamma] = \langle S \mid (s,t)_{m_{s,t}} = (t,s)_{m_{s,t}} \text{ for } s,t \in S,\ s \neq t \text{ and } m_{s,t} \neq \infty \rangle \,.
\]
The \emph{Coxeter group} of $\Gamma$, denoted by $W [\Gamma]$, is the quotient of $A [\Gamma]$ by the relations $s^2=1$ for all $s \in S$.

Let $\SS = \{ \sigma_s \mid s \in S\}$ and $\TT = \{ \tau_s \mid s \in S \}$ be two sets in bijection with $S$.
The \emph{virtual Artin group} of $\Gamma$, denoted by $\VA [\Gamma]$, is the group defined by the presentation with generating set $\SS \sqcup \TT$ and defining relations:
\begin{itemize}
\item[(V1)]
$\tau_s^2=1$, for every $s \in S$;
\item[(V2)]
$(\sigma_s, \sigma_t)_{m_{s,t}} = (\sigma_t, \sigma_s)_{m_{s,t}}$ and $(\tau_s, \tau_t)_{m_{s,t}} = (\tau_t, \tau_s)_{m_{s,t}}$ for all distinct $s,t \in S$ such that $m_{s,t} \neq \infty$;
\item[(V3)]
$(\tau_s, \tau_t)_{m_{s,t}-1} \sigma_s = \sigma_x (\tau_s, \tau_t)_{m_{s,t}-1}$ for all distinct $s,t \in S$ such that $m_{s,t} \neq \infty$, where $x=s$ if $m_{s,t}$ is even, and $x=t$ if $m_{s,t}$ is odd.
\end{itemize}

Coxeter groups were introduced by Tits in a manuscript \cite{Tit13} written in the late 1950s but published only in 2013.
However, most of its content appeared in the book by Bourbaki \cite{Bou68}, which is the foundational work on Coxeter group theory.
These groups have been extensively studied from many different perspectives since their introduction.
Artin groups were introduced by Tits \cite{Tit66} as extensions of Coxeter groups.
However, their systemic study began in the 1970s with the work of Brieskorn \cite{Bri71, Bri73}, Brieskorn--Saito \cite{BS72}, and Deligne \cite{Del72}.
In recent years, they have attracted renewed interest, particularly in geometric group theory.
Virtual Artin groups were recently introduced by Bellingeri, Thiel, and the second author in \cite{BPT23}.
Their main examples are virtual braid groups (see \cite{Kau99,KL04,Kam07}).
The virtual Artin group of a Coxeter graph $\Gamma$ contains both the Artin group and the Coxeter group associated with $\Gamma$, as well as the action of the latter on its root system (see \cite{BPT23} for further details).

Let $V$ be a set.
We denote by $\PP(V)$ the power set of $V$.
Recall that an \emph{abstract simplicial complex} on $V$ is a subset $\Sigma \subseteq \PP (V)$ satisfying the following conditions:
\begin{itemize}
\item[(a)]
$\emptyset \in \Sigma$, and $\{ x\} \in \Sigma$ for every $x \in V$;
\item[(b)]
every element of $\Sigma$ is finite;
\item[(c)]
for all $\AA, \BB \in \PP (V)$, if $\AA \subseteq \BB$ and $\BB \in \Sigma$, then $\AA \in \Sigma$.
\end{itemize}
The \emph{geometric realization} of an abstract simplicial complex $\Sigma$ on $V$ is denoted by $\Delta (\Sigma)$.
We refer to \cite[Definition 2.27]{Koz08} for its definition.
A \emph{geometric simplicial complex} is the geometric realization of an abstract simplicial complex.

\begin{rem}
In this paper, we adopt the convention that the empty set is an element of every abstract simplicial complex.
However, the empty set is not regarded as a simplex in a geometric simplicial complex.
\end{rem}

Recall that $S$ denotes the vertex set of the Coxeter graph $\Gamma$.
An \emph{elemental complex} over $\Gamma$ is an abstract simplicial complex $\DD$ on $S$ satisfying the following additional condition:
\begin{itemize}
\item[(d)]
$\{s,t\} \in \DD$ for every pair of distinct vertices $s,t \in S$ such that $m_{s,t} \neq \infty$.
\end{itemize}

Two elemental complexes, denoted by $\DD_{\sph}$ and $\DD_{<\infty}$, respectively, are of particular interest.
They are defined as follows.
For $X \subseteq S$, let $W_X [\Gamma]$ denote the subgroup of $W [\Gamma]$ generated by $X$.
We say that $X$ is of \emph{spherical type} if $W_X [\Gamma]$ is finite.
Then $\DD_{\sph}$ is the set of subsets of $S$ of spherical type.
On the other hand, we say that $X \subseteq S$ is \emph{free of infinity} if $m_{x,y} \neq \infty$ for all $x,y \in X$.
Then $\DD_{<\infty}$ is the set of free of infinity subsets of $S$. 

Recall that an abstract simplicial complex $\Sigma$ on $V$ is called a \emph{flag complex} if, for every finite subset $\AA \subseteq V$, the following equivalence holds:
\[
\big( \forall x,y \in \AA,\ \{x,y \} \in \Sigma\big)\ \Leftrightarrow\
\AA \in \Sigma\,.
\]

Let $\DD$ be an elemental complex.
In \cite{GP12}, a finite-dimensional, locally regular, connected, and simply connected cube complex $\Psi_\DD$ is constructed, on which the Artin group $A [\Gamma]$ acts regularly and cocompactly by isometries.
The construction of $\Psi_\DD$ depends on the choice of $\DD$.
Note that, in general, the action of $A [\Gamma]$ on $\Psi_\DD$ is not geometric, since it is not properly discontinuous.
However, the stabilizers of the faces are parabolic subgroups of $A [\Gamma]$.
One of the main results of \cite{GP12} states that $\Psi_\DD$ is CAT(0) if and only if $\DD$ is a flag complex.
The notions of a locally regular cube complex and of a CAT(0) space will be recalled in Sections~\ref{sec4} and~\ref{sec5}, respectively.

If $\DD=\DD_{\sph}$, then $\Psi_\DD$ is the \emph{Deligne complex} introduced by Charney and Davis in \cite{CD95} in their study of the $K(\pi, 1)$ problem for Artin groups.
If $\DD = \DD_{<\infty}$, then $\Psi_\DD$ is called the \emph{clique-cube complex} in \cite{CMW19}.
The latter is arguably the most interesting case, as it is easy to show that $\DD_{< \infty}$ is a flag complexe and, if $\DD$ is a flag complex, then $\DD_{<\infty} \subseteq \DD$.
In other words, $\DD_{<\infty}$ is the smallest elemental flag complex. 
If $\DD_{\sph} = \DD_{<\infty}$, then $\Gamma$ is said to be of \emph{FC type}.
This equality plays a crucial role in \cite{CD95} for proving that $A[\Gamma]$ satisfies the $K(\pi, 1)$ conjecture whenever $\Gamma$ is of FC type.

In this paper, we extend the construction of Godelle and the second author \cite{GP12} to virtual Artin groups.
More precisely, we construct a cube complex $\Xi_\DD$ associated with an arbitrary elemental complex $\DD$, and we prove the following two theorems.

Let $G$ be a group acting on a cell complex $\Sigma$.
Recall that the action is said to be \emph{cocompact} if the quotient $\Sigma / G$ is compact.
Furthermore, it is said to be \emph{regular} (in the sense of Bredon) if the stabilizer of each cell fixes that cell pointwise.

\begin{thm}\label{thm1_1}
Let $\DD$ be an elemental complex over $\Gamma$.
\begin{itemize}
\item[(1)]
The cube complex $\Xi_\DD$ is locally regular, connected, and simply connected.
\item[(2)]
The group $\VA[\Gamma]$ acts regularly, cocompactly, and by isometries on $\Xi_\DD$.
\item[(3)]
The stabilizer of each cell is a parabolic subgroup of $\VA [\Gamma]$.
\end{itemize}
\end{thm}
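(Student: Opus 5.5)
The cube complex $\Xi_\DD$ has not been defined at this point, so I first fix the construction I expect, modeled on the Godelle--Paris complex $\Psi_\DD$. The known structure of virtual Artin groups (from \cite{BPT23}) gives a semidirect product decomposition $\VA[\Gamma] = \KVA[\Gamma] \rtimes W[\Gamma]$. Here $W[\Gamma]$ is generated by the $\tau_s$, and $\KVA[\Gamma]$ is an Artin group whose standard generators are the conjugates $\delta_\beta$, indexed by the roots $\beta$ of the root system $\Phi$. The complex $\Xi_\DD$ should then have cubes indexed by triples $(g, X, Y)$ with $g \in \VA[\Gamma]$ and $X \subseteq Y$ in $\DD$. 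The vertices are cosets $g\,\VA_X[\Gamma]$ of standard parabolic subgroups, where $\VA_X$ is generated by $\sigma_x,\tau_x$ for $x \in X$. Two vertices are joined by an edge when $g\VA_X \subset g\VA_{X\cup\{s\}}$, and a cube is the interval $[g\VA_X, g\VA_Y]$. This is the geometric realization of the poset $\VA[\Gamma]\cdot\{\VA_X : X\in\DD\}$ ordered by inclusion, with cubical structure. The first ingredient I need is that standard parabolic subgroups behave well: $\VA_X \cap \VA_Y = \VA_{X\cap Y}$, and $\VA_X = \VA_Y$ forces $X = Y$. I would deduce this from the corresponding facts for $W[\Gamma]$ and for the Artin group $\KVA[\Gamma]$, where parabolics correspond to subsets of roots, using van der Lek's theorem on intersections of standard parabolics in Artin groups.

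Given this, (2) and (3) are formal. The group acts by left multiplication, preserving inclusion, hence cubical structure, hence isometrically. The quotient is the finite cube complex with one cube per pair $X\subseteq Y$ in $\DD$, so the action is cocompact. An element fixing the cube $[g\VA_X, g\VA_Y]$ must fix its minimal vertex $g\VA_X$. It therefore lies in $g\VA_Xg^{-1}$, which fixes every coset $g\VA_Z$ with $Z\supseteq X$, and so fixes the cube pointwise. This gives regularity, and the stabilizer $g\VA_Xg^{-1}$ is parabolic. Local regularity reduces to checking the link of a vertex $g\VA_X$. Cubes containing it as minimal vertex correspond to $Y \in \DD$ with $Y \supseteq X$. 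Cubes containing it as maximal vertex correspond to cosets $h\VA_Z \subseteq \VA_X$. The required flag-like conditions, where pairwise-spanned squares extend uniquely, follow from the intersection property together with the closure of $\DD$ under subsets.

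Connectivity follows since $\emptyset\in\DD$ and $\{s\}\in\DD$. Every vertex is joined to a vertex $g\VA_\emptyset=\{g\}$, and the vertices $\{g\}$ and $\{g\sigma_s\}$, as well as $\{g\}$ and $\{g\tau_s\}$, are both joined to $g\VA_{\{s\}}$. Since $\SS\sqcup\TT$ generates, the complex is connected. The main obstacle is simple connectivity. Here I would use the standard criterion: if $G$ acts on a connected complex with a strict fundamental domain $K$ that is simply connected, and $G$ is presented by the stabilizers of vertices of $K$ amalgamated along their intersections (a complex-of-groups presentation), then the complex is simply connected. The fundamental domain is the cone $K$ on the cubical subdivision of $\DD$, with cone point $\VA_\emptyset$, which is contractible. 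It then remains to check that $\VA[\Gamma]$ is the colimit of the diagram $\{\VA_X\}_{X\in\DD}$. This holds because every defining relation (V1)--(V3) involves at most two letters $s,t$ with $m_{s,t}\neq\infty$, and condition (d) guarantees $\{s,t\}\in\DD$. So each relation already holds in some $\VA_{\{s,t\}}$, and the colimit presentation contains all of them. The delicate point is that the colimit is over $\VA_X$ as abstract groups, and the edge groups must inject. This again uses the parabolic subgroup results, which is where I expect the real work to lie.
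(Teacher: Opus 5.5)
Your architecture matches the paper's proof. $\Xi_\DD$ is the realization of the poset of cosets $g\VA[\Gamma_X]$, $X\in\DD$. Cocompactness comes from finiteness of $\DD$. The stabilizer of $C(h,X,Y)$ is $h\VA[\Gamma_X]h^{-1}$, and it fixes the cube pointwise because the action preserves vertex types. Simple connectivity comes from Haefliger's theorem for the simple complex of groups $X\mapsto\VA[\Gamma_X]$ over $(\DD,\subseteq)$, whose colimit is $\VA[\Gamma]$ exactly because of condition (d) (Lemma~\ref{lem4_4}).

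The paper does not derive the parabolic facts; it imports them (Lemma~\ref{lem3_2}, and the injectivity used in Lemma~\ref{lem4_4}). The injectivity you flag as the delicate point can in fact be avoided. Take the local groups to be the subgroups $\VA_X[\Gamma]\le\VA[\Gamma]$ with inclusions. Then (V1)--(V3) hold in $\VA_{\{s\}}[\Gamma]$ and $\VA_{\{s,t\}}[\Gamma]$ automatically, and the colimit is still $\VA[\Gamma]$. Parabolic subgroups are by definition conjugates of these subgroups anyway.

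The step you misstate is local regularity. It does not require that edges at a vertex which pairwise span squares extend to a cube. That is the flag condition in Gromov's criterion, and by Theorem~\ref{thm1_2} it fails whenever $\DD$ is not flag. For instance, take $\Gamma$ of type $\tilde A_2$ with $\DD=\DD_{\sph}$: the three edges from $v(1,\emptyset)$ to $v(1,\{s\})$, $v(1,\{t\})$, $v(1,\{u\})$ pairwise span squares but lie in no common cube.

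What is required is only that a cube through $v=v(g,X)$ be determined by the neighbours of $v$ it contains. You must also handle cubes in which $v$ is neither the minimal nor the maximal vertex. Suppose $C(k,Z,Z')\ni v$ contains exactly the neighbours $v(h_i,X\setminus\{x_i\})$, $1\le i\le p$, and $v(g,X\cup\{y_j\})$, $1\le j\le q$.
\begin{itemize}
\item Then $Z=X\setminus\{x_1,\dots,x_p\}$ and $Z'=X\cup\{y_1,\dots,y_q\}$.
\item Moreover $k$ lies in $\bigcap_i h_i\VA[\Gamma_{X\setminus\{x_i\}}]$. By the intersection property this is a single left coset of $\bigcap_i\VA[\Gamma_{X\setminus\{x_i\}}]=\VA[\Gamma_Z]$.
\item Hence $k\VA[\Gamma_Z]$, and therefore the cube, is determined.
\end{itemize}
This is the content of Lemmas~\ref{lem4_1} and~\ref{lem4_2}. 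Together with Lemma~\ref{lem3_3}, which shows that two cubes meet in a common face, it is where the intersection property actually enters.
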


The definition of a parabolic subgroup of $\VA [\Gamma]$ will be recalled in Section~\ref{sec2}.

\begin{thm}\label{thm1_2}
Let $\DD$ be an elemental complex over $\Gamma$.
Then the cube complex $\Xi_\DD$ is CAT(0) if and only if $\DD$ is a flag complex.
\end{thm}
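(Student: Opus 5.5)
By Theorem~\ref{thm1_1}, $\Xi_\DD$ is a connected, simply connected, finite-dimensional cube complex. By the Cartan--Hadamard theorem, in the form of Gromov's link condition, it is therefore CAT(0) if and only if the link of every vertex is a flag simplicial complex. The proof reduces to identifying these vertex links combinatorially and comparing them with $\DD$. Since $\VA[\Gamma]$ acts cocompactly and regularly, it suffices to compute the links of the vertices of a fundamental domain. I expect these to be the vertices $v_X$ indexed by $X\in\DD$: the cosets $g\,\VA_X[\Gamma]$ in the construction modelled on \cite{GP12}, where $\VA_X$ is the relevant parabolic subgroup.

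\textbf{Step 1: describe the link of a vertex $v_X$.} A cube containing $v_X$ should correspond to a pair $Y\subseteq Z$ in $\DD$ with $X\in[Y,Z]$, together with a coset representative. I would show that $\Link(v_X)$ is a join of two pieces:
\begin{itemize}
\item a \emph{descending part}, formed by the directions $X\setminus\{s\}$ together with their $\VA_X$-translates;
\item an \emph{ascending part}, formed by the directions $X\cup\{s\}$ with $X\cup\{s\}\in\DD$.
\end{itemize}
The ascending part is the link $\Link_\DD(X)$ of the simplex $X$ in $\DD$. The descending part is a complex built from the cosets of $\VA_{X\setminus\{s\}}$ in $\VA_X$, analogous to the coset complex appearing in \cite{GP12}.

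\textbf{Step 2: flagness of each piece.} A join is flag if and only if each factor is flag. For the ascending part: if $\DD$ is flag, then $\Link_\DD(X)$ is flag. For the descending part, I would argue exactly as in \cite{GP12}. A set of pairwise adjacent cosets of the form $g\VA_{X\setminus\{s\}}$, for distinct $s$, must have a common point. This needs the intersection property of parabolic subgroups,
\[
\VA_{Y}\cap\VA_{Y'}=\VA_{Y\cap Y'},
\]
together with a Helly-type statement for parabolic cosets in $\VA_X$. I expect this to come from the earlier sections on parabolic subgroups, via the semidirect decomposition $\VA[\Gamma]=\KVA[\Gamma]\rtimes W[\Gamma]$, with the kernel $\KVA$ an Artin group. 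This reduces the statement to the corresponding one for Artin groups and Coxeter groups, where it is known from \cite{GP12}. This is the main obstacle: the descending link's flagness relies on a Helly property for cosets of standard parabolic subgroups in $\VA_X$, not merely on pairwise intersections.

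\textbf{Step 3: the converse.} Suppose $\DD$ is not flag. Choose a minimal non-simplex $\AA$ all of whose pairs lie in $\DD$. Then every proper subset of $\AA$ lies in $\DD$. Take the vertex $v_\emptyset$; its link is the ascending part, which contains $\DD$ as $\Link_\DD(\emptyset)$. There, the vertices $\{a\}$ for $a\in\AA$ are pairwise joined by edges, since squares for the pairs $\{a,b\}$ exist, but they span no simplex, since $\AA\notin\DD$ means there is no cube. So this link is not flag, and Gromov's criterion shows that $\Xi_\DD$ is not CAT(0).
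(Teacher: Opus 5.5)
Your proposal is correct and follows essentially the same route as the paper: Gromov's link criterion, the join decomposition $\mathrm{Link}_{v(g,X)}(\Xi_{\mathcal D}) \simeq \mathrm{VArt}[\Gamma_X] \ast \mathcal{D}^X$ (Lemma~\ref{lem5_7}, with $\mathcal D^X = \mathrm{Link}_{\mathcal D}(X)$), flagness of the coset factor via a Helly property for parabolic cosets obtained through $\mathrm{VA}[\Gamma] \simeq \mathrm{KVA}[\Gamma] \rtimes W[\Gamma]$ (Proposition~\ref{prop5_2}), and the converse via the link at $v(g,\emptyset)$, i.e.\ $\mathcal D^\emptyset = \mathcal D$. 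The Helly step you only sketch is carried out in the paper as you anticipate: the Coxeter Helly property first aligns the $\pi_K$-images of the coset representatives, and then the Artin Helly property is applied in $\mathrm{KVA}[\Gamma] = A[\hat\Gamma]$, using that $\mathrm{KVA}[\Gamma_X]$ is the standard parabolic subgroup generated by $\{\delta_\beta \mid \beta \in \Phi[\Gamma_X]\}$ (Lemma~\ref{lem5_6}).
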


The space $\Xi_\DD$, regarded as a simplicial complex, is independently introduced by Gavazzi and Martin in \cite{GM26} in the case where $\DD = \DD_{\sph}$.
In that paper, the authors endow $\Xi_{\DD_\sph}$ with the so-called \emph{Moussong metric}, which differs from the metric induced by our cube complex structure, and refer to $\Xi_{\DD_\sph}$ as the \emph{virtual Deligne complex}.
The two papers overlap to some extent.
In particular, part of Theorem~\ref{thm1_1} is also proved in \cite{GM26} using essentially the same arguments.
Nevertheless, the two papers address different aspects of the subject. 
In \cite{GM26}, the authors prove that $\Xi_{\DD_\sph}$ endowed with the Moussong metric is CAT(0) whenever $\Gamma$ is locally reducible, whereas in the present paper we prove  that $\Xi_{\DD_\sph}$, endowed with its cube complex structure, is CAT(0) whenever $\Gamma$ is of FC type.
These correspond to the two families of Artin groups studied in \cite{CD95, Cha00}.

The paper is organized as follows.
Let $\DD$ be an elemental complex over $\Gamma$.
In Section~\ref{sec2}, we construct $\Xi_\DD$ as a geometric simplicial complex.
In Section~\ref{sec3}, we endow $\Xi_\DD$ with a cube complex structure.
Theorems~\ref{thm1_1} and~\ref{thm1_2} are proved in Sections~\ref{sec4} and~\ref{sec5}, respectively.

\begin{acknow}
The authors would like to thank Mar\'ia Cumplido, Federica Gavazzi, Anne Lonjou and Alexandre Martin for several stimulating and constructive exchanges.
The first author is part of the research project PID2022-138719NA-I00, financed by MCIN/AEI/10.13039/501100011033/FEDER, UE.
The second author is partially supported by the French project ``CaGeT'' (ANR-25-CE40-4162) of the ANR.
The IMB receives support from the EIPHI Graduate School (contract ANR-17-EURE-0002).
\end{acknow}


\section{Virtual Godelle--Paris complex: simplicial complex version}\label{sec2}

In this section, we define the virtual Godelle--Paris complex associated with an elemental complex $\DD$, as a simplicial complex.
In Section~\ref{sec3}, we will show that it admits a natural cube complex structure.

Let $X \subseteq S$.
We denote by $\Gamma_X$ the full Coxeter subgraph of $\Gamma$ spanned by $X$, by $W_X [\Gamma]$ the subgroup of $W [\Gamma]$ generated by $X$, and by $A_X [\Gamma]$ the subgroup of $A [\Gamma]$ generated by $X$.
A subgroup of the form $W_X [\Gamma]$ (resp. $A_X [\Gamma]$) is called a \emph{standard parabolic subgroup} of $W [\Gamma]$ (resp. $A [\Gamma]$), and a subgroup conjugate to $W_X [\Gamma]$ (resp. $A_X [\Gamma]$) is called a \emph{parabolic subgroup} of $W [\Gamma]$ (resp. $A [\Gamma]$).
It is known from \cite{Bou68} that the natural homomorphism $W[\Gamma_X] \to W_X [\Gamma]$ that sends $x$ to $x$ for all $x \in X$ is an isomorphism.
Similarly, it follows from \cite{vLek83} that the natural homomorphism $A [\Gamma_X] \to A_X [\Gamma]$ that sends $x$ to $x$ for all $x \in X$ is an isomorphism.
Henceforth, from now on we identify $W_X [\Gamma]$ with $W[\Gamma_X]$ and $A_X [\Gamma]$ with $A [\Gamma_X]$.

These definitions and results, which are now classical, were extended to virtual Artin groups in \cite{GGP26} as follows.
For $X \subseteq S$, we set $\SS_X = \{ \sigma_x \mid x \in X \}$ and $\TT_X = \{\tau_x \mid x \in X\}$, and we denote by $\VA_X [\Gamma]$ the subgroup of $\VA [\Gamma]$ generated by $\SS_X \sqcup \TT_X$.
Such a subgroup is called a \emph{standard parabolic subgroup}, and a subgroup conjugate to $\VA_X [\Gamma]$ is called a \emph{parabolic subgroup}.
As in the cases of Coxeter and Artin groups, the natural homomorphism $\VA[\Gamma_X] \to \VA_X [\Gamma]$ that sends $\sigma_x$ to $\sigma_x$ and $\tau_x$ to $\tau_x$ for all $x \in X$ is an isomorphism (see \cite[Theorem 1.1]{GGP26}).
Henceforth, from now on we identify $\VA_X [\Gamma]$ with $\VA [\Gamma_X]$.

Let $(P, \le)$ be a partially ordered set.
Recall that the \emph{derived complex} of $(P, \le)$, denoted by $(P, \le)'$, is the abstract simplicial complex whose vertex set is $P$ and whose simplices are the finite chains $\{ x_0 < x_1 < \cdots < x_p\}$.
As ever, the emptyset is considered as a chain.
We denote by $\Delta(P, \le)$ the geometric realization of $(P, \le)'$.

\begin{defin}
Let $\DD$ be an elemental complex over $\Gamma$.
Consider the set
\[
\CC_\DD = \{ g \VA [\Gamma_X] \mid g \in \VA [\Gamma],\ X \in \DD\}\,,
\]
ordered by inclusion.
The \emph{virtual Godelle--Paris complex} associated with $\DD$, denoted by $\Xi_\DD$, is the geometric realization of the derived complex of $\CC_\DD$.
In other words, $\Xi_\DD = \Delta (\CC_\DD, \subseteq)$.
\end{defin}


\section{Virtual Godelle--Paris complex: cube complex version}\label{sec3}

Recall that a \emph{cube complex} is a polyhedral cell complex $E$ such that each cell is isometric to a standard cube $[0,1]^n$ of a Euclidean space, and the gluing maps are isometries.
If the dimensions of the cubes are uniformly bounded, then such a space is a complete geodesic metric space (see \cite{Bri91}).

\begin{rem}
In this paper, we adopt the convention that every cube complex is \emph{regular}, meaning that no two distinct faces of the same cube are identified.
\end{rem}

In this section, we fix an elemental complex $\DD$ over $\Gamma$.
Our goal is to show that $\Xi_\DD$ admits a natural cube complex structure.

Let $X = \{x_1, \dots, x_n \}$ be a finite set of cardinality $n$.
There is a bijection $\zeta_X : \PP (X) \to V([0,1]^n)$ from the power set of $X$ to the vertex set of the standard cube $[0,1]^n$, defined as follows.
for $Z \subseteq X$, we set $\zeta_X (Z) = (\varepsilon_1, \dots, \varepsilon_n)$, where $\varepsilon_i = 1$ if $x_i \in Z$, and $\varepsilon_i=0$ if $x_i \not \in Z$.
This map induces a continuous map $\zeta_X : \Delta (\PP (X), \subseteq) \to [0,1]^n$ that sends each simplex $\Delta(Z_0, Z_1, \dots, Z_p)$ corresponding to a chain $\{ Z_0 \subset Z_1 \subset \cdots \subset Z_p\}$ to the convex hull of $\{ \zeta_X(Z_0), \zeta_X (Z_1), \dots, \zeta_X (Z_p)\}$.
It is well known that this map is a homeomorphism (see, for example, \cite[Lemma A.4.9]{Dav08}), hence we can identify $\Delta (\PP (X), \subseteq)$ with $[0,1]^n$ via $\zeta_X$.
We denote by $\Cub (X)$ the complex $\Delta (\PP (X), \subseteq)$ endowed with the metric induced by this identification.

For $Y, Y' \subseteq X$ such that $Y \subseteq Y'$, we denote by $\Fac(Y, Y', X)$ the convex hull in $\Cub(X)$ of $\{ \zeta_X (Z) \mid Y \subseteq Z \subseteq Y'\}$.
It is easily seen that $\Fac (Y, Y', X)$ is a face of $\Cub (X)$, and that every face of $\Cub (X)$ arises in this way.
Moreover, $\Cub (Y' \setminus Y)$ is isometric to $\Fac (Y, Y' , X)$ via the isometry that sends $\zeta_{Y' \setminus Y} (T)$ to $\zeta_X (Y \cup T)$ for every $T \in \PP (Y' \setminus Y)$.

It follows from the presentations of $\VA [\Gamma]$ and $W [\Gamma]$, that there exists a surjective homomorphism $\pi_K : \VA [\Gamma] \to W [\Gamma]$ that sends $\sigma_s$ to $1$ and $\tau_s$ to $s$ for every $s \in S$.
The kernel of $\pi_K$ is called the \emph{kure virtual Artin group} of $\Gamma$ and is denoted by $\KVA [\Gamma]$.
Moreover, $\pi_K$ admits a section $\iota_K : W [\Gamma] \to \VA [\Gamma]$ that sends $s$ to $\tau_s$ for all $s \in S$.
Consequently, we have the semidirect product decomposition $\VA [\Gamma] \simeq \KVA [\Gamma] \rtimes W [\Gamma]$.
These homomorphisms, together with this semidirect product decomposition, play an important role in the study of virtual Artin groups and, in particular, in the present paper. 

\begin{lem}\label{lem3_1}
Let $X, Y \subseteq S$ and $g,h \in \VA [\Gamma]$.
Then $g \VA [\Gamma_X] \subseteq h \VA [\Gamma_Y]$ if and only if $X \subseteq Y$ and $g^{-1} h \in \VA [\Gamma_Y]$.
In particular, $g \VA [\Gamma_X] = h \VA [\Gamma_Y]$ if and only if $X = Y$ and $g^{-1} h \in \VA [\Gamma_X]$.
\end{lem}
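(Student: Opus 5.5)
The plan is to prove both implications directly, reducing the nontrivial part to a statement about standard parabolic subgroups. One small observation first: as written the lemma says $g^{-1}h \in \VA[\Gamma_Y]$. The condition that actually makes $g\VA[\Gamma_X] \subseteq h\VA[\Gamma_Y]$ true, given $X \subseteq Y$, is $h^{-1}g \in \VA[\Gamma_Y]$, and this is equivalent to the stated one because $\VA[\Gamma_Y]$ is a subgroup. So the statement is correct as written.

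\emph{The ``if'' direction.} Assume $X \subseteq Y$ and $g^{-1}h \in \VA[\Gamma_Y]$. Then $\VA[\Gamma_X] \subseteq \VA[\Gamma_Y]$, since the generators $\SS_X \sqcup \TT_X$ lie in $\SS_Y \sqcup \TT_Y$. We also have $g \in h\VA[\Gamma_Y]$, so $g\VA[\Gamma_Y] = h\VA[\Gamma_Y]$. Combining these gives $g\VA[\Gamma_X] \subseteq g\VA[\Gamma_Y] = h\VA[\Gamma_Y]$.

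\emph{The ``only if'' direction.} Assume $g\VA[\Gamma_X] \subseteq h\VA[\Gamma_Y]$.
\begin{itemize}
\item Since $g \in g\VA[\Gamma_X]$, we get $h^{-1}g \in \VA[\Gamma_Y]$, and hence $g^{-1}h \in \VA[\Gamma_Y]$.
\item Multiplying on the left by $g^{-1}$ turns the inclusion into $\VA[\Gamma_X] \subseteq g^{-1}h\VA[\Gamma_Y] = \VA[\Gamma_Y]$.
\item It remains to show that $\VA[\Gamma_X] \subseteq \VA[\Gamma_Y]$ forces $X \subseteq Y$. Take $x \in X$; then $\tau_x \in \VA[\Gamma_Y]$. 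Apply $\pi_K$: it gives $x = \pi_K(\tau_x) \in \pi_K(\VA[\Gamma_Y]) = W_Y[\Gamma]$, because $\pi_K$ sends $\SS_Y \sqcup \TT_Y$ onto $Y \cup \{1\}$.
\item Conclude with the classical fact from Bourbaki \cite{Bou68} that a generator $s \in S$ lies in $W_Y[\Gamma]$ only if $s \in Y$. One way to see this: the set of letters appearing in a reduced expression of an element is independent of the reduced expression. So $x \in Y$.
\end{itemize}

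\emph{The ``in particular'' statement.} Equality of cosets means inclusion both ways, so the main statement gives $X \subseteq Y$ and $Y \subseteq X$, that is, $X = Y$, together with $g^{-1}h \in \VA[\Gamma_X]$. Conversely, if $X = Y$ and $g^{-1}h \in \VA[\Gamma_X]$, the two cosets are equal.

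The only step that is not pure coset manipulation is showing $X \subseteq Y$. Projecting to the Coxeter group via $\pi_K$ handles it cleanly, using only the standard Coxeter fact. An alternative would be the abelianization of $\VA[\Gamma]$, but that is awkward here because the relations (V3) can identify $\sigma_s$ with $\sigma_t$, whereas $\pi_K$ avoids the issue.
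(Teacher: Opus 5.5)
Your proposal is correct and follows essentially the same route as the paper: the coset manipulations for membership, plus projecting through $\pi_K$ to $W[\Gamma]$ to obtain $X \subseteq Y$ from the classical Coxeter fact. The only cosmetic difference is the order of steps. You first reduce to $\VA[\Gamma_X] \subseteq \VA[\Gamma_Y]$ and check the generators $\tau_x$ one at a time. The paper instead applies $\pi_K$ directly to the coset inclusion and cites the corresponding statement for cosets of standard parabolic subgroups of $W[\Gamma]$.
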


\begin{proof}
Clearly, if $X \subseteq Y$ and $g^{-1} h \in \VA [\Gamma_Y]$, then $g \VA [\Gamma_X] \subseteq g \VA [\Gamma_Y] = h \VA [\Gamma_Y]$.
We now prove the converse.
Assume that $g \VA [\Gamma_X] \subseteq h \VA [\Gamma_Y]$.
Let $u = \pi_K (g)$ and $v = \pi_K (h)$.
Applying $\pi_K$ to the inclusion $g \VA [\Gamma_X] \subseteq h \VA [\Gamma_Y]$ yields $u W [\Gamma_X] \subseteq v W [\Gamma_Y]$.
It is well known that the latter inclusion implies $X \subseteq Y$ (see, for example, \cite[Theorem 4.1.6]{Dav08}).
Finally, $g \in g \VA [\Gamma_X] \subseteq h \VA [\Gamma_Y]$, hence $g^{-1} h \in \VA [\Gamma_Y]$.
\end{proof}

The following result is proved in \cite[Theorem 1.2]{GGP26}.

\begin{lem}[Gálvez-Mateos--Gavazzi--Paris \cite{GGP26}]\label{lem3_2}
Let $X, Y \subseteq S$.
Then
\[
\VA [\Gamma_X] \cap \VA [\Gamma_Y] = \VA [\Gamma_{X \cap Y}]\,.
\]
\end{lem}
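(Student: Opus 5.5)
The plan is to reduce the statement to the corresponding facts for Coxeter groups and Artin groups, using the semidirect product decomposition $\VA[\Gamma] \simeq \KVA[\Gamma] \rtimes W[\Gamma]$ recalled above.

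The inclusion $\VA[\Gamma_{X\cap Y}] \subseteq \VA[\Gamma_X]\cap\VA[\Gamma_Y]$ is obvious, so only the reverse inclusion needs work. The decomposition restricts to each parabolic: $\VA[\Gamma_X] = \KVA_X \rtimes W[\Gamma_X]$, where $\KVA_X = \KVA[\Gamma] \cap \VA[\Gamma_X]$. Here $\pi_K(\VA[\Gamma_X]) = W[\Gamma_X]$ and $\iota_K(W[\Gamma_X]) \subseteq \VA[\Gamma_X]$.

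Let $g \in \VA[\Gamma_X]\cap\VA[\Gamma_Y]$, and write $g = k\,\iota_K(w)$ with $k \in \KVA[\Gamma]$ and $w \in W[\Gamma]$. Applying $\pi_K$ gives $w \in W[\Gamma_X]\cap W[\Gamma_Y]$. By Bourbaki this intersection is $W[\Gamma_{X\cap Y}]$, so $\iota_K(w) \in \VA[\Gamma_{X\cap Y}]$. Hence $k = g\,\iota_K(w)^{-1} \in \KVA_X \cap \KVA_Y$, and it remains to prove
\[
\KVA_X \cap \KVA_Y = \KVA_{X\cap Y}\,.
\]

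For this kernel statement I will use the description of $\KVA[\Gamma]$ from \cite{BPT23}. It is generated by the elements $\delta_\beta = \iota_K(w)\,\sigma_s\,\iota_K(w)^{-1}$, indexed by the roots $\beta = w(\alpha_s)$ of the root system $\Phi$ of $W[\Gamma]$. Its defining relations are the $W$-translates of the Artin relations (V2) between $\sigma_s,\sigma_t$ together with the (V3)-type identifications. With this description, $\KVA[\Gamma]$ is an Artin group $A[\Omega]$ on a (possibly infinite) Coxeter graph $\Omega$ whose vertex set is $\Phi$, or a set of representatives of roots.

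Two things then have to be checked.
\begin{itemize}
\item[(a)] The subgroup $\KVA_X$ is the standard parabolic subgroup of $A[\Omega]$ generated by the vertices lying in the root subsystem $\Phi_X = W[\Gamma_X]\cdot\{\alpha_x \mid x \in X\}$. Generation is clear: the $\delta_\beta$ with $\beta\in\Phi_X$ together with $\iota_K(W[\Gamma_X])$ generate $\VA[\Gamma_X]$, so these $\delta_\beta$ normally generate, and in fact generate, the kernel part.
\item[(b)] The full subgraph of $\Omega$ on $\Phi_X$ is the graph $\Omega_X$ produced by the same construction for $\Gamma_X$.
\end{itemize}

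Granting (a) and (b), van der Lek's theorem on intersections of standard parabolic subgroups of Artin groups applies to $A[\Omega]$. It also applies in the infinite case, since every element involves only finitely many generators. This gives $\KVA_X\cap\KVA_Y = \langle \delta_\beta \mid \beta\in\Phi_X\cap\Phi_Y\rangle$. Finally, $\Phi_X\cap\Phi_Y = \Phi_{X\cap Y}$ is a classical fact about root systems, equivalent to the Coxeter-group statement on reflections: $W_X\cap W_Y = W_{X\cap Y}$ for the reflection subgroups.

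The main obstacle is step (b): showing that the relations of $\KVA[\Gamma]$ among generators indexed by $\Phi_X$ are exactly those coming from $\Gamma_X$, and not extra ones produced by conjugating by elements of $W[\Gamma]\setminus W[\Gamma_X]$. My first attempt would be to characterize an edge between $\beta,\gamma$ in $\Omega$ intrinsically: via the dihedral reflection subgroup generated by the reflections in $\beta$ and $\gamma$, and whether $\{\beta,\gamma\}$ is a simple system for it. Such a condition is independent of the ambient group, because reflection subgroups of $W[\Gamma_X]$ and their canonical simple systems (Dyer) are the same whether computed in $W[\Gamma_X]$ or in $W[\Gamma]$. If this intrinsic description fails, the fallback is to build a retraction $\KVA[\Gamma]\to\KVA_X$ killing $\delta_\beta$ for $\beta\notin\Phi_X$, and to check that it respects each dihedral relation.
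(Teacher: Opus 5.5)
The paper does not prove this lemma; it imports it from \cite[Theorem 1.2]{GGP26}. Your argument is correct and self-contained. It is the same reduction the paper carries out in the proof of Proposition~\ref{prop5_2}: project by $\pi_K$, apply the Coxeter statement, correct by an element of $\iota_K(W[\Gamma_{X\cap Y}])$ to land in $\KVA[\Gamma]=A[\hat\Gamma]$, and conclude with the Artin-group statement. Here that statement is van der Lek's intersection theorem rather than Proposition~\ref{prop5_4}\,(2).

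You should drop step (b), which you single out as the main obstacle, because it is not needed. Van der Lek's theorem gives $A_U\cap A_{U'}=A_{U\cap U'}$ for \emph{arbitrary} subsets $U,U'$ of the vertex set of $\hat\Gamma$, whatever the induced subgraphs look like. So all you use is that $\KVA[\Gamma]\cap\VA[\Gamma_X]$ is the subgroup of $A[\hat\Gamma]$ generated by $\{\delta_\beta\mid\beta\in\Phi[\Gamma_X]\}$. That is your step (a), and Lemma~\ref{lem5_6} also provides it. Statement (b) is only needed to identify that subgroup with the abstract group $\KVA[\Gamma_X]$, i.e.\ for injectivity of $\VA[\Gamma_X]\to\VA[\Gamma]$. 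That is a separate result (\cite[Theorem 1.1]{GGP26}); the present lemma only concerns subgroups of $\VA[\Gamma]$.

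In step (a), make the passage from ``normally generate'' to ``generate'' explicit. Since $\iota_K(u)\,\delta_\beta\,\iota_K(u)^{-1}=\delta_{u\cdot\beta}$ and $W[\Gamma_X]$ preserves $\Phi[\Gamma_X]$, the subgroup $K'$ generated by these $\delta_\beta$ is normal in $\VA[\Gamma_X]$. Hence $\VA[\Gamma_X]=K'\,\iota_K(W[\Gamma_X])$ with $K'\subseteq\ker\pi_K$, and therefore $K'=\KVA[\Gamma]\cap\VA[\Gamma_X]$.

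For the final step, the cleanest justification is the classical equality $\Phi[\Gamma]\cap V_X=\Phi[\Gamma_X]$. It yields $\Phi[\Gamma_X]\cap\Phi[\Gamma_Y]=\Phi[\Gamma]\cap V_{X\cap Y}=\Phi[\Gamma_{X\cap Y}]$. Finally, the vertex set of $\hat\Gamma$ is all of $\Phi[\Gamma]$, with $\delta_\beta\ne\delta_{-\beta}$, not a set of representatives of roots up to sign.
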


For $g \in \VA [\Gamma]$ and $X \in \DD$, we denote by $v(g, X)$ the vertex of $\Xi_\DD$ (viewed as a geometric simplicial complex) corresponding to the chain $\{ g \VA [\Gamma_X] \}$ of length $1$ in $\CC_\DD$.
For $g \in \VA [\Gamma]$ and $X,Y \in \DD$ with $X \subseteq Y$, we denote by $C(g, X, Y)$ the full simplicial subcomplex of $\Xi_\DD$ spanned by $\{ v(g, Z) \mid X \subseteq Z \subseteq Y\}$.
By Lemma \ref{lem3_1}, there exists an isomorphism of simplicial complexes $\xi_{g, X, Y} : \Cub (Y \setminus X) \to C(g, X, Y)$ that sends $\zeta_{Y \setminus X} (T)$ to $v(g, X \cup T)$ for every $T \in \PP (Y \setminus X)$. 
So, $C(g, X, Y)$ can be identified with the standard cube of dimension $|Y \setminus X|$ via this isomorphism.

\begin{lem}\label{lem3_3}
Let $g, h \in \VA [\Gamma]$ and $X, Y, X', Y' \in \DD$ be such that $X \subseteq Y$ and $X' \subseteq Y'$.
\begin{itemize}
\item[(1)]
We have $C (g, X, Y) \cap C (h, X', Y') \neq \emptyset$ if and only if $X \cup X' \subseteq Y \cap Y'$ and $g^{-1} h \in \VA [\Gamma_{Y \cap Y'}]$.
\item[(2)]
Suppose that $C (g, X, Y) \cap C (h, X', Y') \neq \emptyset$.
Then
\[
C (g, X, Y) \cap C (h, X', Y') = C(g, R, Y \cap Y') = C (h, R, Y \cap Y')\,,
\]
where $R$ is the smallest element of $\DD$ satisfying $X \cup X' \subseteq R \subseteq Y \cap Y'$ and $g^{-1} h \in \VA [\Gamma_R]$.
\end{itemize}
\end{lem}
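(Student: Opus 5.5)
The plan is to work entirely at the level of vertices. Each $C(g,X,Y)$ is by definition a \emph{full} simplicial subcomplex of $\Xi_\DD$. The intersection of two full subcomplexes of a simplicial complex is the full subcomplex spanned by their common vertices: a simplex lies in both exactly when all its vertices do, and a point of the geometric realization lies in a full subcomplex exactly when the open simplex containing it has all its vertices there. So both (1) and (2) reduce to determining the set of common vertices of $C(g,X,Y)$ and $C(h,X',Y')$.

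By Lemma~\ref{lem3_1}, $v(g,Z)=v(h,Z')$ holds if and only if $Z=Z'$ and $g^{-1}h\in\VA[\Gamma_Z]$. Hence the common vertices are exactly the $v(g,Z)=v(h,Z)$ such that:
\begin{itemize}
\item $X\cup X'\subseteq Z\subseteq Y\cap Y'$, and
\item $g^{-1}h\in\VA[\Gamma_Z]$.
\end{itemize}
Note that $Y\cap Y'\in\DD$ because $\DD$ is closed under subsets, and likewise every such $Z$ lies in $\DD$.

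For (1), the condition $g^{-1}h\in\VA[\Gamma_Z]$ is upward closed in $Z$, since $\VA[\Gamma_Z]\subseteq\VA[\Gamma_{Z''}]$ when $Z\subseteq Z''$. Thus some admissible $Z$ exists if and only if $Z=Y\cap Y'$ is admissible. That is precisely the condition $X\cup X'\subseteq Y\cap Y'$ and $g^{-1}h\in\VA[\Gamma_{Y\cap Y'}]$.

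For (2), the key point, and the only nontrivial step, is that the family of admissible $Z$ has a smallest element. This follows from Lemma~\ref{lem3_2}. If $Z_1$ and $Z_2$ are admissible, then
\[
g^{-1}h\in\VA[\Gamma_{Z_1}]\cap\VA[\Gamma_{Z_2}]=\VA[\Gamma_{Z_1\cap Z_2}],
\]
and clearly $X\cup X'\subseteq Z_1\cap Z_2\subseteq Y\cap Y'$. So $Z_1\cap Z_2$ is again admissible. Since $S$ is finite, the intersection $R$ of all admissible sets is admissible, and $R\in\DD$ as a subset of $Y\cap Y'$.

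By upward closure, the admissible sets are then exactly the $Z$ with $R\subseteq Z\subseteq Y\cap Y'$. The common vertex set is therefore
\[
\{v(g,Z)\mid R\subseteq Z\subseteq Y\cap Y'\},
\]
and this also equals $\{v(h,Z)\mid R\subseteq Z\subseteq Y\cap Y'\}$, since $g^{-1}h\in\VA[\Gamma_R]\subseteq\VA[\Gamma_Z]$ for each such $Z$. The full subcomplex spanned by this set is, by definition, both $C(g,R,Y\cap Y')$ and $C(h,R,Y\cap Y')$, which proves (2).

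I expect no serious obstacle. The only places needing care are the existence of the minimal $R$, which rests on Lemma~\ref{lem3_2}, and the justification that intersections of full subcomplexes are full subcomplexes on the common vertices.
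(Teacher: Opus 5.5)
Your proposal is correct and follows essentially the same route as the paper. Both proofs reduce to common vertices of full subcomplexes via Lemma~\ref{lem3_1}, use the vertex $v(g,Y\cap Y')$ for part~(1), and obtain the least admissible $R$ from closure under intersection via Lemma~\ref{lem3_2}. Your upward-closure remark is just a slightly more compact way of organizing the same vertex-set comparison.
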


\begin{proof}
{\it Part~(1).}
Suppose that $C (g, X, Y) \cap C (h, X', Y') \neq \emptyset$.
The cubes $C(g, X, Y)$ and $C(h, X', Y')$ are both full simplicial subcomplexes of $\Xi_{\DD}$, hence $C(g, X, Y) \cap C(h, X', Y')$ is also a full simplicial subcomplex of $\Xi_{\DD}$. 
In particular, $C(g, X, Y) \cap C(h, X', Y')$ contains at least one vertex, denoted by $\tilde{v}$.
There exist $Z , Z'\in \DD$ such that $X \subseteq Z \subseteq Y$, $X' \subseteq Z' \subseteq Y'$ and $\tilde v = v(g, Z) = v (h, Z')$.
The latter equality means that $g \VA [\Gamma_Z] = h \VA [\Gamma_{Z'}]$.
Hence, by Lemma~\ref{lem3_1}, we have $Z=Z'$ and $g^{-1} h \in \VA [\Gamma_Z]$.
Since $X \subseteq Z = Z' \subseteq Y $ and $X' \subseteq Z' = Z \subseteq Y'$, we have $X \cup X' \subseteq Z = Z' \subseteq Y \cap Y'$.
Furthermore, since $Z = Z' \subseteq Y \cap Y'$ and $g^{-1} h \in \VA [\Gamma_Z]$, we have $g^{-1} h \in \VA [\Gamma_{Y \cap Y'}]$.

Conversely, suppose that $X \cup X' \subseteq Y \cap Y'$ and $g^{-1} h \in \VA [\Gamma_{Y \cap Y'}]$. 
Let $\tilde v = v(g, Y \cap Y')$. 
Since $X \subseteq X \cup X' \subseteq Y \cap Y' \subseteq Y$, we have $\tilde v \in C (g , X, Y)$. 
Moreover, as $g^{-1} h \in \VA [\Gamma_{Y \cap Y'}]$, we have $\tilde v = v (h, Y \cap Y')$. 
Again, $X' \subseteq X \cup X' \subseteq Y \cap Y' \subseteq Y'$, hence $\tilde v \in C (h, X', Y')$. 
This shows that $C (g, X, Y) \cap C (h, X', Y') \neq \emptyset$.

{\it Part~(2).}
Suppose that $C (g, X, Y) \cap C (h, X', Y') \neq \emptyset$.
By Part~(1), we have $X \cup X' \subseteq Y \cap Y'$ and $g^{-1} h \in \VA [\Gamma_{Y \cap Y'}]$.
Let $\UU = \{ R' \in \DD \mid X \cup X' \subseteq R' \subseteq Y \cap Y' \text{ and } g^{-1} h \in \VA [\Gamma_{R'}] \}$.
We have $Y \cap Y' \in \UU$, hence $\UU \neq \emptyset$.
Moreover, by Lemma~\ref{lem3_2}, if $R_1, R_2 \in \UU$, then $R_1 \cap R_2 \in \UU$.
It follows that $\UU$ has a least element with respect to inclusion, which we denote by $R$.
Note that, since $g^{-1} h \in \VA [\Gamma_R]$, we have $C (g, R, Y \cap Y') = C (h, R, Y \cap Y')$.
It remains to show that $C (g, X , Y) \cap C (h, X', Y') = C (g, R, Y \cap Y')$.
Since these two sets are full simplicial subcomplexes, it suffices to show that every vertex of $C(g, R, Y \cap Y')$ is a vertex of $C(g, X, Y) \cap C(h, X', Y')$, and that every vertex of $C(g, X, Y) \cap C(h, X', Y')$ is a vertex of $C(g, R, Y \cap Y')$.

Let $\tilde v$ be a vertex of $C (g, R, Y \cap Y')$.
There exists $Z \in \DD$ such that $R \subseteq Z \subseteq Y \cap Y'$ and $\tilde v = v(g, Z) = v (h, Z)$.
Since $X \subseteq R \subseteq Z \subseteq Y \cap Y' \subseteq Y$, we have $\tilde v \in C (g, X ,Y)$.
Similarly, $\tilde v \in C (h, X', Y')$.
So, $\tilde v$ is a vertex of $C(g, X, Y) \cap C(h, X', Y')$.

Let $\tilde v$ be a vertex of $C (g, X, Y) \cap C (h, X', Y')$.
As shown in the proof of Part (1), there exists $Z \in \DD$ such that $X \cup X' \subseteq Z \subseteq Y \cap Y'$, $g^{-1} h \in \VA [\Gamma_Z]$, and $\tilde v = v (g, Z) = v(h, Z)$.
Since $Z \in \UU$, by minimality of $R$, we have $R \subseteq Z \subseteq Y \cap Y'$, hence $\tilde v \in C (g, R, Y \cap Y')$.
\end{proof}

The following proposition completes the description of the cube complex structure on $\Xi_\DD$.

\begin{prop}\label{prop3_4}
The collection of cubes $C (g, X, Y)$, where $g \in \VA [\Gamma]$, $X, Y \in \DD$ and $X \subseteq Y$, endows $\Xi_\DD$ with the structure of a cube complex.
\end{prop}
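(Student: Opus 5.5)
To prove Proposition~\ref{prop3_4}, I will check three things for the family of subcomplexes $C(g,X,Y)$: they cover $\Xi_\DD$; every face of a cube is again a cube of the family; and two cubes meet, if at all, in a common face. Regularity (no two distinct faces of one cube identified) is automatic, since each $C(g,X,Y)$ is an embedded full subcomplex and $\xi_{g,X,Y}$ is an isomorphism of simplicial complexes onto it. Gluing maps are then isometries because every cube carries the metric of $[0,1]^n$ transported by $\xi$.

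\emph{Covering.} Take a simplex of $\Xi_\DD$, that is, a chain $g_0\VA[\Gamma_{X_0}] \subset \cdots \subset g_p\VA[\Gamma_{X_p}]$. By Lemma~\ref{lem3_1}, $X_0 \subseteq \cdots \subseteq X_p$, and each $g_i$ can be replaced by $g_0$, since $g_0^{-1}g_i \in \VA[\Gamma_{X_i}]$. So the vertices of the simplex are $v(g_0,X_i)$, and they all lie in $C(g_0,X_0,X_p)$. This cube is a full subcomplex, so it contains the simplex.

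\emph{Faces.} Under $\xi_{g,X,Y}$, a face $\Fac(A,B,Y\setminus X)$ of $\Cub(Y\setminus X)$ is spanned by the vertices $v(g,X\cup T)$ with $A\subseteq T\subseteq B$. This is exactly $C(g,X\cup A,X\cup B)$. Note that $X\cup A$ and $X\cup B$ lie in $\DD$ because they are subsets of $Y$. The identification $\Cub(B\setminus A)\cong \Fac(A,B,Y\setminus X)$ recalled earlier makes $\xi_{g,X\cup A,X\cup B}$ agree with the restriction of $\xi_{g,X,Y}$. Hence the cube structures, and so the metrics, coincide.

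\emph{Intersections.} This is the main step, and Lemma~\ref{lem3_3} does the work. If two cubes meet, part~(2) gives
\[
C(g,X,Y)\cap C(h,X',Y')=C(g,R,Y\cap Y')=C(h,R,Y\cap Y').
\]
Since $X\subseteq R\subseteq Y\cap Y'\subseteq Y$, the previous step shows this is the face of $C(g,X,Y)$ with $A=R\setminus X$ and $B=(Y\cap Y')\setminus X$. Symmetrically, it is a face of $C(h,X',Y')$.

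The remaining point, which I expect to be the only subtle one, is that the two cubical parametrizations of this intersection coincide. Here we use that $g^{-1}h\in\VA[\Gamma_R]$. It gives $v(g,Z)=v(h,Z)$ for every $Z \supseteq R$, so $\xi_{g,R,Y\cap Y'}=\xi_{h,R,Y\cap Y'}$ as maps on vertices, and hence as simplicial isomorphisms. The gluing is therefore an isometry.

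Finally, a standard fact about polyhedral complexes finishes the proof: a space that is the union of such cubes, closed under faces and intersecting in common faces, carries the weak topology, which is the simplicial topology of $\Xi_\DD$. Therefore $\Xi_\DD$ is a cube complex with these cells.
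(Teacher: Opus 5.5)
Your proposal is correct and follows the paper's proof: each simplex is placed in $C(g_0,X_0,X_p)$ via Lemma~\ref{lem3_1}, and Lemma~\ref{lem3_3} shows that two cubes meet, if at all, in a common face. Your additional checks (faces of a cube are again cubes of the family, the parametrizations $\xi_{g,R,Y\cap Y'}$ and $\xi_{h,R,Y\cap Y'}$ agree because $g^{-1}h\in\VA[\Gamma_R]$, and the topology remarks) are details the paper leaves implicit.
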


\begin{proof}
We begin by showing that every simplex of $\Xi_\DD$ is contained in a cube.
In particular, this will imply that $\Xi_\DD$ is the union of the cubes $C (g, X, Y)$, where $g \in \VA [\Gamma]$, $X, Y \in \DD$ and $X \subseteq Y$.
Let $\vartheta$ be a simplex of $\Xi_\DD$.
Let $\{ v (g_0, X_0), v (g_1, X_1), \dots, v(g_p, X_p) \}$ be its set of vertices.
After reordering the vertices of $\vartheta$ if necessary, we may assume that
\[
g_0 \VA [\Gamma_{X_0}] \subseteq g_1 \VA [\Gamma_{X_1}] \subseteq \cdots \subseteq g_p \VA [\Gamma_{X_p}]\,.
\]
By Lemma~\ref{lem3_1}, it follows that $X_0 \subseteq X_1 \subseteq \cdots \subseteq X_p$ and that $g_0^{-1} g_i \in \VA [\Gamma_{X_i}]$ for every $0 \le i \le p$.
The latter inclusion implies that $g_0 \VA [\Gamma_{X_i}] = g_i \VA [\Gamma_{X_i}]$, and hence $v(g_0, X_i) = v(g_i, X_i)$ for every $0 \le i \le p$.
It follows that $\vartheta \subseteq C(g_0, X_0, X_p)$.

The proof is completed by observing that, by Lemma~\ref{lem3_3}, the intersection of any two distinct cubes is either empty or a common face of both cubes.
\end{proof}


\section{Proof of Theorem \ref{thm1_1}}\label{sec4}

Let $E$ be a cube complex, and let $v$ be a vertex of $E$.
The \emph{link} of $v$ in $E$ is $\Link_v (E) = \{ x \in E \mid d(v,x) = \varepsilon \}$, where $0< \varepsilon < \frac{1}{2}$.
The cell decomposition of $E$ induces a cell decomposition of $\Link_v (E)$ in which every cell is naturally a simplex.
However, this decomposition is not necessarily a triangulation, in the sense that it is not necessarily a geometric simplicial complex.
For example, if two squares are glued along their boundaries, then the link of any vertex in the resulting square complex is not a simplicial complex.
We say that $E$ is \emph{locally regular} if, for every vertex $v$ of $E$, $\Link_v (E)$ is a simplicial complex.

In this section, we fix an elemental complex $\DD$ over $\Gamma$ and denote by $\Xi_\DD$ the Godelle--Paris cube complex associated with $\DD$, as described in Sections~\ref{sec2} and~\ref{sec3}.
We begin by proving that $\Xi_\DD$ is locally regular.

Let $\tilde v = v(g,X)$ be a vertex of $\Xi_\DD$.
We denote by $\LL_{\tilde v}$ the set of vertices $\tilde u$ of $\Xi_\DD$ that are joined to $\tilde v$ by an edge, that is, for which there exists a $1$-cube in $\Xi_\DD$ with endpoints $\tilde u$ and $\tilde v$.
It is easily verified that a vertex $\tilde u$ of $\Xi_\DD$ belongs to $\LL_{\tilde v}$ if and only if it is of one of the following two forms:
\begin{itemize}
\item
$\tilde u = v(h, X \setminus \{x\})$, where $x \in X$ and $g^{-1} h \in \VA [\Gamma_X]$,
\item
$\tilde u = v(g, X \cup \{ y\})$, where $y \in S \setminus X$ and $X \cup \{y\} \in \DD$.
\end{itemize}

\begin{lem}\label{lem4_1}
Let $\tilde v = v(g,X)$ be a vertex of $\Xi_\DD$.
Let $\EE = \{ v(h_1, X \setminus \{x_1\}), \dots, v(h_p, X \setminus \{x_p\}), v(g, X \cup \{ y_1\}), \dots, v(g, X \cup \{ y_q\}) \}$ be a finite subset of $\LL_{\tilde v}$, where $x_1, \dots, x_p \in X$, $g^{-1} h_i \in \VA [\Gamma_X]$ for every $1 \le i \le p$, $y_1, \dots, y_q \in S \setminus X$, and $X \cup \{ y_j \} \in \DD$ for every $1 \le j \le q$.
Let $C(k, Z, Z')$ be a cube of $\Xi_\DD$ containing $\tilde v$.
If $C(k, Z, Z') \cap \LL_{\tilde v} = \EE$, then $Z =  X \setminus \{x_1, \dots, x_p\}$ and $Z' = X \cup \{y_1, \dots, y_q\}$.
\end{lem}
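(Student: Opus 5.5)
First, I reduce to a cube based at $g$. Since $\tilde v\in C(k,Z,Z')$, the vertex $\tilde v$ equals $v(k,W)$ for some $W\in\DD$ with $Z\subseteq W\subseteq Z'$. Lemma~\ref{lem3_1} then gives $W=X$ and $k^{-1}g\in\VA[\Gamma_X]$. It follows that $k\VA[\Gamma_T]=g\VA[\Gamma_T]$ for every $T\supseteq X$. Moreover, every vertex of the cube has the form $v(k,T)$ with $Z\subseteq T\subseteq Z'$. In particular, $Z\subseteq X\subseteq Z'$.

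Next, I identify the neighbours of $\tilde v$ lying in the cube. A vertex $v(k,T)$ of $C(k,Z,Z')$ is adjacent to $\tilde v$ exactly when $T$ differs from $X$ by one element. In terms of the description of $\LL_{\tilde v}$, adjacency means $v(k,T)$ equals some $v(h,X\setminus\{x\})$ or some $v(g,X\cup\{y\})$, and Lemma~\ref{lem3_1} forces $T=X\setminus\{x\}$ or $T=X\cup\{y\}$ respectively. Hence
\[
C(k,Z,Z')\cap\LL_{\tilde v}=\{v(k,X\setminus\{x\})\mid x\in X\setminus Z\}\cup\{v(g,X\cup\{y\})\mid y\in Z'\setminus X\},
\]
where I have used $v(k,X\cup\{y\})=v(g,X\cup\{y\})$ by the first step.

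Finally, I compare this set with $\EE$. By Lemma~\ref{lem3_1}, the vertex $v(h,T)$ determines the set $T$. So $C(k,Z,Z')\cap\LL_{\tilde v}=\EE$ implies that the sets $\{X\setminus\{x\}\mid x\in X\setminus Z\}$ and $\{X\setminus\{x_i\}\mid 1\le i\le p\}$ coincide. It also implies that $\{X\cup\{y\}\mid y\in Z'\setminus X\}$ and $\{X\cup\{y_j\}\mid 1\le j\le q\}$ coincide. The lower sets cannot be confused with the upper ones, since their cardinalities differ. We therefore get $X\setminus Z=\{x_1,\dots,x_p\}$ and $Z'\setminus X=\{y_1,\dots,y_q\}$. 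Since $Z\subseteq X\subseteq Z'$, this gives $Z=X\setminus\{x_1,\dots,x_p\}$ and $Z'=X\cup\{y_1,\dots,y_q\}$.

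The only delicate point is the first step: the coset representative $k$ need not equal $g$, so the "lower" vertices carry $k$ rather than $g$. This causes no difficulty, because Lemma~\ref{lem3_1} shows that vertices determine their subsets regardless of the representative. One also uses that the vertices $v(h_i,X\setminus\{x_i\})$ in $\EE$ are listed consistently, but only their underlying sets $X\setminus\{x_i\}$ matter for the conclusion.
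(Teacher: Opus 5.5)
Your proof is correct and follows essentially the same route as the paper. Both arguments first use Lemma~\ref{lem3_1} to get $k^{-1}g\in\VA[\Gamma_X]$, then identify the neighbours of $\tilde v$ in $C(k,Z,Z')$ as the vertices with underlying sets $X\setminus\{x\}$, $x\in X\setminus Z$, and $X\cup\{y\}$, $y\in Z'\setminus X$; the paper phrases the comparison with $\EE$ as an inclusion plus a contradiction for strictness, while you compute $C(k,Z,Z')\cap\LL_{\tilde v}$ explicitly and match underlying sets, which is only a repackaging.
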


\begin{proof}
Let $Z_0 = X \setminus \{x_1, \dots, x_p\}$.
By definition, for every $1 \le i \le p$, we have $Z \subseteq X \setminus \{x_i\}$, hence  $Z \subseteq Z_0$.
Since $\tilde v = v(g, X) \in C(k,Z,Z')$, we have $k^{-1} g \in \VA [\Gamma_X]$, hence $\tilde v = v (k, X)$.
Suppose that $Z \subsetneq Z_0$.
Choose $x' \in Z_0 \setminus Z$.
Then $v(k, X \setminus \{x'\}) \in C (k, Z, Z') \cap \LL_{\tilde v}$, while $v(k, X \setminus \{x'\}) \not \in \EE$.
This is a contradiction, hence $Z = Z_0$.

Let $Z_0' = X \cup \{y_1, \dots, y_q\}$.
By definition, for every $1 \le j \le q$, we have $X \cup \{y_j\} \subseteq Z'$, hence $Z_0' \subseteq Z'$.
In particular, since $Z' \in \DD$, we have $Z_0' \in \DD$.
Suppose that $Z_0' \subsetneq Z'$.
Choose $y' \in Z' \setminus Z_0'$.
Since $k^{-1} g \in \VA [\Gamma_X] \subseteq \VA [\Gamma_{X \cup \{ y' \}}]$, we have $v(k, X \cup \{y'\}) = v(g, X \cup \{ y'\})$, hence $v(k, X \cup \{y'\}) \in C(k, Z, Z') \cap \LL_{\tilde v}$, while $v(k, X \cup \{y'\}) \not \in \EE$.
This is a contradiction, hence $Z' = Z_0'$.
\end{proof}

\begin{lem}\label{lem4_2}
The cube complex $\Xi_\DD$ is locally regular.
\end{lem}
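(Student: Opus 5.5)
The plan is to use the standard criterion for local regularity of a cube complex. For a vertex $\tilde v$, the cells of $\Link_{\tilde v}(\Xi_\DD)$ correspond to the cubes containing $\tilde v$. The vertices of the link correspond to the edges at $\tilde v$, hence to the elements of $\LL_{\tilde v}$. A cube $C$ containing $\tilde v$ gives the simplex of the link whose vertex set is $C \cap \LL_{\tilde v}$. So $\Link_{\tilde v}(\Xi_\DD)$ is a simplicial complex as soon as two conditions hold:
\begin{itemize}
\item[(a)] distinct cubes containing $\tilde v$ meet $\LL_{\tilde v}$ in distinct sets;
\item[(b)] the intersection of two cubes containing $\tilde v$ is again a cube containing $\tilde v$, so that the intersection of two simplices of the link is a common face.
\end{itemize}
Condition (b) is essentially already given by Lemma~\ref{lem3_3} together with Proposition~\ref{prop3_4}: any two cubes meeting at $\tilde v$ intersect in a common face, and that face contains $\tilde v$. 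The real content is condition (a), and Lemma~\ref{lem4_1} is designed for it.

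Fix $\tilde v = v(g,X)$ and let $C(k,Z,Z')$ and $C(k',W,W')$ be two cubes containing $\tilde v$ with $C(k,Z,Z')\cap\LL_{\tilde v} = C(k',W,W')\cap\LL_{\tilde v} = \EE$. Since $\EE$ is finite and is a subset of $\LL_{\tilde v}$, it has the form in Lemma~\ref{lem4_1}, with some $x_i \in X$ and $y_j \in S\setminus X$. Lemma~\ref{lem4_1} then gives
\[
Z = W = X\setminus\{x_1,\dots,x_p\}, \qquad Z' = W' = X\cup\{y_1,\dots,y_q\}.
\]
Both cubes contain $\tilde v$, so, as in the proof of Lemma~\ref{lem4_1}, $k^{-1}g \in \VA[\Gamma_X]$ and $k'^{-1}g\in\VA[\Gamma_X]$. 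Hence $k^{-1}k' \in \VA[\Gamma_X]\subseteq \VA[\Gamma_{Z'}]$. I still need $k^{-1}k'\in\VA[\Gamma_Z]$ to conclude $C(k,Z,Z') = C(k',Z,Z')$. For this I will look at the bottom vertex: $v(k,Z)$ lies in the first cube and $v(k',Z)$ lies in the second.

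I will use the intersection of the two cubes. By Lemma~\ref{lem3_3}(2) it equals $C(k,R,Z')$, where $R$ is the smallest element of $\DD$ with $Z\subseteq R\subseteq Z'$ and $k^{-1}k'\in\VA[\Gamma_R]$. This intersection contains $\tilde v$ and all of $\EE$. Each $v(h_i, X\setminus\{x_i\})$ then lies in $C(k,R,Z')$, so $R\subseteq X\setminus\{x_i\}$ for every $i$, which gives $R\subseteq Z$. Since also $Z\subseteq R$, we get $R=Z$. Therefore $k^{-1}k'\in\VA[\Gamma_Z]$ and the two cubes coincide. This proves condition (a).

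The main obstacle is organizing the passage from ``cells of the link'' to ``simplicial complex'' carefully. In particular, I must check that each simplex of the link is determined by its vertex set (this is condition (a)) and that faces of link simplices correspond to faces of cubes containing $\tilde v$. The latter holds because a face of $C(k,Z,Z')$ containing $v(k,X)$ is $C(k,Z_1,Z_1')$ with $Z\subseteq Z_1\subseteq X\subseteq Z_1'\subseteq Z'$. Once these two facts are in place, the link is the simplicial complex on $\LL_{\tilde v}$ whose simplices are the sets $C\cap\LL_{\tilde v}$.
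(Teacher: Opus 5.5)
Your proof is correct and follows essentially the same route as the paper: Lemma~\ref{lem4_1} pins down $Z=W$ and $Z'=W'$, and the intersection property of standard parabolic subgroups then forces $k^{-1}k'\in\VA[\Gamma_Z]$, so a cube containing $\tilde v$ is determined by $C\cap\LL_{\tilde v}$. The only difference is how you get this last step: you go through the minimal $R$ of Lemma~\ref{lem3_3}(2) (when $p=0$, use $\tilde v\in C(k,R,Z')$, or your observation $k^{-1}k'\in\VA[\Gamma_X]$, to get $R\subseteq Z=X$), whereas the paper applies Lemma~\ref{lem3_2} directly to $k^{-1}k'\in\bigcap_i \VA[\Gamma_{X\setminus\{x_i\}}]$.
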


\begin{proof}
Let $\tilde v = v(g,X)$ be a vertex of $\Xi_\DD$.
Consider a finite subset $\EE = \{ v(h_1, X \setminus \{x_1\}), \dots, 
v(h_p, X \setminus \{x_p\}), v(g, X \cup \{ y_1\}), \dots, v(g, X \cup \{ y_q\}) \}$ of $\LL_{\tilde v}$.
We claim that there exists at most one cube $C(k,Z,Z')$ of $\Xi_\DD$ such that $\tilde v \in C (k,Z,Z')$ and $C(k, Z, Z') \cap \LL_{\tilde v} = \EE$.
Suppose, for a contradiction, that there exist two distinct cubes $C(k_1,Z_1,Z_1')$ and $C(k_2, Z_2, Z_2')$ such that $\tilde v \in C (k_1, Z_1, Z_1')$, $\tilde v \in C (k_2, Z_2, Z_2')$, and $C (k_1, Z_1, Z_1') \cap \LL_{\tilde v} = C (k_2, Z_2, Z_2') \cap \LL_{\tilde v} = \EE$.
By Lemma~\ref{lem4_1}, $Z_1 = Z_2 = X \setminus \{x_1, \dots, x_p\}$ and $Z_1' = Z_2' = X \cup \{y_1, \dots, y_q\}$. 
For every $1 \le i \le p$, we have $k_1^{-1} h_i \in \VA [\Gamma_{X \setminus \{ x_i \}}]$ and $k_2^{-1} h_i \in \VA [\Gamma_{X \setminus \{ x_i \}}]$, hence $k_1^{-1} k_2 \in \VA [\Gamma_{X \setminus \{ x_i \}}]$.
Applying Lemma~\ref{lem3_2}, we obtain $k_1^{-1} k_2 \in \VA [\Gamma_{X \setminus \{ x_1, \dots, x_p \}}] = \VA [\Gamma_{Z_1}]$.
It follows that $C (k_1, Z_1, Z_1') = C (k_2, Z_2, Z_2')$, which is a contradiction since we assumed that $C (k_1, Z_1, Z_1') \neq C (k_2, Z_2, Z_2')$.

Let $\tilde u \in \LL_{\tilde v}$.
By definition, there exists a $1$-cube of $\Xi_\DD$ with endpoints $\tilde u$ and $\tilde v$, and by the previous discussion, this $1$-cube is unique. 
Hence, the $0$-skeleton of $\Link_{\tilde v}(\Xi_\DD)$ is in bijection with $\LL_{\tilde v}$.
So, without loss of generality, we may identify $\LL_{\tilde v}$ with the $0$-skeleton of $\Link_{\tilde v}(\Xi_\DD)$.
Let $D$ be a cell of $\Link_{\tilde v}(\Xi_\DD)$.
By construction, $D$ is a simplex, and by the previous discussion, $D$ is completely determined by its set of vertices.
This shows that $\Link_{\tilde v}(\Xi_\DD)$ is a geometric simplicial complex.
\end{proof}

The proof of Theorem~\ref{thm1_1} relies in part on the theory of simple complexes of groups.
We present only the aspects of the theory that are needed for the proof of Theorem~\ref{thm1_1} and refer to \cite[Chapter II.12]{BH99} for a detailed exposition of the subject.

Let $(\PP, \preceq)$ be a partially ordered set such that $\Delta (\PP, \preceq)$ is simply connected.
A \emph{simple complex of groups} $\GG$ over $\PP$ consists of the following data: \begin{itemize}
\item
for each $x \in \PP$, a group $G_x$;
\item
for every pair $x,y \in \PP$ with $x \prec y$, an injective homomorphism $\varphi_{x,y} : G_x \to G_y$.
\end{itemize}
These data must satisfy the following condition:
\begin{itemize}
\item
for every $x,y,z \in \PP$ with $x \prec y \prec z$, $\varphi_{x,z} = \varphi_{y,z} \circ \varphi_{x,y}$.
\end{itemize}

The \emph{fundamental group} of the complex of groups $\GG$, denoted by $\pi_1(\GG)$, is the quotient $\left(\ast_{x \in \PP} G_x\right)/\sim$, where $\sim$ is the equivalence relation generated by
\[
\forall x,y \in \PP \text{ with } x \prec y,\ \forall g \in G_x,\ g \sim \varphi_{x,y}(g)\,.
\]
In other words, $\pi_1 (\GG) = \varinjlim_{x \in \PP} G_x$ is the direct limit of the system of groups and monomorphisms $\{G_x, \varphi_{x,y}\}$.
For each $x \in \PP$, we denote by $\psi_x : G_x \to \pi_1 (\GG)$ the homomorphism induced by the inclusion of $G_x$ into $\ast_{y \in \PP} G_y$.
We say that $\GG$ is \emph{developable} if $\psi_x$ is injective for every $x \in \PP$.

\begin{rem}
Non-developable simple complexes of groups are not easy to construct.
Examples can be found in \cite[Examples~II.12.17]{BH99}.
\end{rem}

Suppose that $\GG$ is a simple complex of groups over a partially ordered set $(\PP, \preceq)$ as above.
Suppose that $\GG$ is developable, and set $G = \pi_1 (\GG)$.
For each $x \in \PP$, as the homomorphism $\psi_x : G_x \to G$ is injective, we can and do identify $G_x$ with its image under $\psi_x$.
Let $\approx$ be the equivalence relation on $G \times \PP$ defined by
\[
(g_1,x_1) \approx (g_2,x_2) \text{ if } x_1 = x_2 \text{ and } g_1^{-1}g_2 \in G_{x_1}\,.
\]
Set $\hat \PP (\GG) = (G \times \PP)/\approx$.
The equivalence class of a pair $(g,x)$ is denoted by $[g,x]$.
Define a relation $\preceq$ on $\hat \PP(\GG)$ by
\[
[g,x] \preceq [h,y] \text{ if } x \preceq y \text{ and } g^{-1} h \in G_y\,.
\]
It is easy to verify that this relation is well defined (that is, it does not depend on the choice of $g$ and $h$), and is a partial order on $\hat \PP (\GG)$.
The \emph{universal cover} of $\GG$ is then defined by $U(\GG) = \Delta(\hat \PP (\GG), \preceq)$.
Note that $G$ acts on $\hat \PP (\GG)$ by $g \cdot [h,x] = [gh,x]$, and this action induces an action of $G$ on $U(\GG)$.

The following theorem can be found in \cite[Part~II, Corollary~12.21]{BH99}.
The original version is due to Haefliger \cite{Hae91}.

\begin{thm}[Haefliger \cite{Hae91}]\label{thm4_3}
Let $\GG$ be a simple complex of groups over a partially ordered set $(\PP, \preceq)$ as above.
Assume that $\GG$ is developable, and set $G = \pi_1 (\GG)$.
\begin{itemize}
\item [(1)]
$U(\GG)$ is connected and simply connected.
\item[(2)]
$U(\GG)/G = \Delta(\PP, \preceq)$.
\end{itemize}
\end{thm}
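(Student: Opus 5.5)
The plan is to prove both parts by working with the copy of $\Delta(\PP,\preceq)$ sitting inside $U(\GG)$ as a strict fundamental domain for the action of $G$. We realize $K=\Delta(\PP,\preceq)$ in $U(\GG)$ as the full subcomplex spanned by the vertices $[1,x]$, $x\in\PP$. The map $x\mapsto[1,x]$ is an order embedding, because $[1,x]\preceq[1,y]$ if and only if $x\preceq y$.

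\emph{Step 1: Part~(2) and the fundamental domain.} Every vertex $[g,x]$ equals $g\cdot[1,x]$. If $[g_0,x_0]\prec\cdots\prec[g_p,x_p]$ is a chain, then $g_0^{-1}g_i\in G_{x_i}$ for all $i$. Hence $[g_i,x_i]=[g_0,x_i]$, and the chain is $g_0\cdot\{[1,x_0]\prec\cdots\prec[1,x_p]\}$. So $U(\GG)=G\cdot K$.

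Moreover, $g[1,x]=[1,y]$ forces $x=y$ and $g\in G_x$. Thus $K$ meets each $G$-orbit of points in exactly one point, and the stabilizer of $[1,x]$ is $G_x$. The map $U(\GG)\to K$ sending $g\cdot p\mapsto p$ is then well defined and continuous, and it identifies $U(\GG)/G$ with $K=\Delta(\PP,\preceq)$. This proves Part~(2).

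\emph{Step 2: connectedness.} $K$ is connected since it is simply connected. Because $G=\varinjlim G_x$ is generated by the subgroups $G_x$ (we use here that $\GG$ is developable), every $g\in G$ can be written as a product $g=g_1\cdots g_k$ with $g_i\in G_{x_i}$.

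For $h\in G$ and $g_i\in G_{x_i}$, the translates $hK$ and $hg_iK$ share the vertex $h[1,x_i]$. Inducting on $k$ gives a chain of pairwise intersecting translates from $K$ to $gK$. Hence $G\cdot K=U(\GG)$ is connected.

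\emph{Step 3: simple connectedness, the main step.} Let $p:\tilde U\to U(\GG)$ be the universal covering, and let $H$ be the group of all lifts of elements of $G$. This gives an extension $1\to\pi_1(U(\GG))\to H\to G\to 1$. Since $K$ is simply connected, it lifts to a subcomplex $\tilde K\subseteq\tilde U$ mapped homeomorphically onto $K$ by $p$.

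For $g\in G_x$, there is a unique lift $s_x(g)\in H$ fixing the lift $\tilde v_x$ of $[1,x]$. Uniqueness of lifts makes $s_x:G_x\to H$ a homomorphism. Take $x\prec y$ and $g\in G_x$. Then $g$ fixes the edge from $[1,x]$ to $[1,y]$ pointwise, since $G$ preserves the vertex labels $x\in\PP$. So $s_x(g)$ fixes the lifted edge, and therefore $s_x(g)=s_y(\varphi_{x,y}(g))$. By the universal property of the direct limit, the $s_x$ assemble into a homomorphism $s:G\to H$ that is a section of $H\to G$.

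Repeating the argument of Step~2 inside $\tilde U$ shows that $s(G)\cdot\tilde K$ is a union of closed subcomplexes meeting along vertices. The key point is that it is also open: the star of a vertex $s(g)\tilde v_x$ is covered by the translates $s(gG_x)\tilde K$, because $p$ is a local homeomorphism. Hence $s(G)\cdot\tilde K=\tilde U$.

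Finally, we show $p$ is injective, so $p$ is a homeomorphism and $U(\GG)$ is simply connected. Suppose $p(s(g)\tilde y)=p(s(h)\tilde y')$ with $\tilde y,\tilde y'\in\tilde K$. Their projections lie in the same $G$-orbit and both lie in $K$, so $\tilde y=\tilde y'$ by Step~1. Let $x$ be the minimal vertex of the open simplex of $\tilde K$ containing $\tilde y$. Then $g^{-1}h$ stabilizes $p(\tilde y)$, which forces $g^{-1}h\in G_x$. Since $s(G_x)$ fixes $\tilde v_x$ and hence the corresponding lifted simplices, $s(g^{-1}h)$ fixes $\tilde y$, and so $s(g)\tilde y=s(h)\tilde y$.

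I expect the main obstacle to be the openness claim in Step~3 and this last stabilizer argument. I would handle both by using that $G$ acts without inversions, preserving the labels $x\in\PP$, so that pointwise stabilizers of simplices are exactly the groups $G_{x_0}$ of their minimal vertex.
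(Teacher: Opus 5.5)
The paper does not prove this statement; it quotes it from Bridson--Haefliger (Corollary II.12.21), where it comes out of Haefliger's general theory of simple complexes of groups and their developments. Your proposal is instead a correct, self-contained proof using only covering-space theory.

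The structure is sound. $K=\{[1,x]\}$ is a strict fundamental domain with stabilizer of $[1,x]$ equal to $G_x$, which gives part (2), and together with generation of $G$ by the $G_x$ it gives connectedness. In the universal cover, simple connectivity of $\Delta(\PP,\preceq)$ (beyond connectedness) is used only to lift $K$ to $\tilde K$. The universal property of $\varinjlim G_x$ (as opposed to mere generation) is used only to assemble the $s_x$ into a splitting $s$. The open--closed argument then gives $s(G)\tilde K=\tilde U$, and the stabilizer computation shows that $p$ is injective.

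Quoting the theorem buys brevity and the generality of Haefliger's framework: non-simple complexes of groups and developability criteria. Your argument buys a proof that uses nothing beyond what Section~4 needs and shows exactly where each hypothesis enters.

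One point needs a small repair. The action of $G$ on $U(\GG)$ need not be faithful: if $\PP$ is a single point, $U(\GG)$ is a point while $G=G_x$ is arbitrary. So "the group of lifts of elements of $G$" must be taken as the group $H$ of pairs $(g,\tilde g)$ with $p\circ\tilde g=g\circ p$. With that definition the extension $1\to\pi_1(U(\GG))\to H\to G\to 1$ holds, and the rest of Step~3 goes through verbatim.

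Three smaller justifications should also be adjusted.
\begin{itemize}
\item For $x\prec y$ and $g\in G_x$, $g$ fixes $[1,y]$ because $\psi_x=\psi_y\circ\varphi_{x,y}$, i.e.\ $G_x\subseteq G_y$ inside $G$; label preservation is not the reason.
\item In the openness step, ``$p$ is a local homeomorphism'' should be made precise. The actual input is that every simplex of $U(\GG)$ through $g[1,x]$ has the form $gk\sigma$ with $k\in G_x$ and $[1,x]\in\sigma\subseteq K$. Combine this with uniqueness of the lift of a simplex through a prescribed lift of one of its vertices.
\item Generation of $G$ by the $G_x$ holds for any direct limit, so developability is not what is being used in Step~2.
\end{itemize}
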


Recall that we are given an elemental complex $\DD$ ordered by inclusion.
For each $X \in \DD$, let $v(X)$ denote the vertex of $\Delta (\DD, \subseteq)$ corresponding to the chain $\{ X \}$ of length $1$.
Let $\Delta^* (\DD, \subseteq)$ be the full geometric subcomplex of $\Delta (\DD, \subseteq)$ spanned by $\{ v(X) \mid X \in \DD \setminus \{ \emptyset\}\}$.
It it easy to see that $\Delta (\DD, \subseteq)$ is the cone over $\Delta^* (\DD, \subseteq)$ with apex $v(\emptyset)$, hence $\Delta(\DD, \subseteq)$ is simply connected.
Now, consider the simple complex of groups $\GG = \GG_\DD [\Gamma]$ over $(\DD, \subseteq)$, where $G_X = \VA [\Gamma_X]$ for every $X \in \DD$ and where, for $X \subset Y$, $\varphi_{X,Y} : \VA [\Gamma_X] \to \VA [\Gamma_Y]$ is the canonical homomorphism.
Recall that, by \cite[Theorem~1.1]{GGP26}, each such homomorphism is injective.

\begin{rem}
The assumption that $\{x,y\} \in \DD$ for all distinct $x,y \in S$ such that $m_{x,y} \neq \infty$ is a key assumption for the following lemma.
\end{rem}

\begin{lem}\label{lem4_4}
Let $\GG = \GG_\DD [\Gamma]$ be as above.
Then $\pi_1 (\GG) = \VA [\Gamma]$ and, for every $X \in \DD$, the homomorphism $\psi_X : \VA [\Gamma_X] \to \pi_1 (\GG) = \VA [\Gamma]$ is the canonical homomorphism.
In particular, $\GG_\DD [\Gamma]$ is developable.
\end{lem}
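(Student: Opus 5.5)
The plan is to compare presentations. Since $\pi_1(\GG) = \varinjlim_{X \in \DD} \VA[\Gamma_X]$, and each $\VA[\Gamma_X]$ has the presentation with generators $\SS_X \sqcup \TT_X$ and relations (V1)--(V3) restricted to $X$ (by the identification $\VA[\Gamma_X] = \VA_X[\Gamma]$, \cite[Theorem 1.1]{GGP26}), the colimit has a presentation obtained as follows. Take the union of all these generators and all these relations. Then identify each generator of $\VA[\Gamma_X]$ with its image in $\VA[\Gamma_Y]$ whenever $X \subset Y$.

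First step: simplify the generating set. Every $X \in \DD$ is a union of singletons $\{x\} \in \DD$ by condition (a), and $\{x\} \subseteq X$. Hence the copy of $\sigma_x$ (resp. $\tau_x$) in $\VA[\Gamma_X]$ is identified with the copy in $\VA[\Gamma_{\{x\}}]$. Moreover, any two copies coming from $X, X'$ containing $x$ are both identified with the copy in $\VA[\Gamma_{\{x\}}]$. So $\pi_1(\GG)$ is generated by a single set $\SS \sqcup \TT$, with the defining relations being the union over $X \in \DD$ of the relations (V1)--(V3) involving only letters indexed by $X$.

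Second step: compare relations. Each such relation is a relation of $\VA[\Gamma]$, so there is a canonical homomorphism $\Phi\colon \pi_1(\GG) \to \VA[\Gamma]$, $\sigma_s \mapsto \sigma_s$, $\tau_s \mapsto \tau_s$, and $\Phi$ is surjective. Conversely, every defining relation of $\VA[\Gamma]$ appears in the presentation of $\pi_1(\GG)$:
\begin{itemize}
\item (V1) for $s$ involves only $\{s\} \in \DD$;
\item (V2) and (V3) for a pair $s \neq t$ with $m_{s,t} \neq \infty$ involve only letters indexed by $\{s,t\}$, and $\{s,t\} \in \DD$ by condition (d).
\end{itemize}
This is exactly where the elemental assumption is used. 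Hence the identity on generators also defines a homomorphism $\VA[\Gamma] \to \pi_1(\GG)$ inverse to $\Phi$, so $\pi_1(\GG) = \VA[\Gamma]$.

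Third step: under this identification, $\psi_X$ sends $\sigma_x \mapsto \sigma_x$ and $\tau_x \mapsto \tau_x$, so it is the canonical homomorphism $\VA[\Gamma_X] \to \VA[\Gamma]$. That map is injective by \cite[Theorem 1.1]{GGP26}, which gives developability.

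The main point needing care is the first step: checking that the colimit over the poset really reduces to this single presentation. I would argue this via the universal property. A family of homomorphisms from the $\VA[\Gamma_X]$ into a group $H$, compatible with the $\varphi_{X,Y}$, is the same thing as an assignment of images to $\SS \sqcup \TT$ satisfying all relations supported on simplices of $\DD$. This follows because compatibility forces the images of $\sigma_x,\tau_x$ to agree across all $X \ni x$, by passing through $\{x\}$.
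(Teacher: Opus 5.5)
Your proposal is correct and is essentially the paper's proof. The paper defines $\Phi\colon \pi_1(\GG)\to\VA[\Gamma]$ from the canonical maps and $\Psi\colon\VA[\Gamma]\to\pi_1(\GG)$ by $\sigma_s\mapsto\psi_{\{s\}}(\sigma_s)$, $\tau_s\mapsto\psi_{\{s\}}(\tau_s)$, checking (V1)--(V3) via $\{s\},\{s,t\}\in\DD$ exactly as you do, and your first step of routing every generator through the singleton $\{x\}$ is what underlies its ``easy to verify'' claim $\Psi\circ\Phi=\id_{\pi_1(\GG)}$.

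One small correction: the presentation of $\VA[\Gamma_X]$ holds by definition, so \cite[Theorem 1.1]{GGP26} is needed only for injectivity (of the $\varphi_{X,Y}$ and of $\psi_X$), not for the presentation.
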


\begin{proof}
For $X \in \DD$, let $\iota_X : \VA [\Gamma_X] \to \VA [\Gamma]$ denote the canonical homomorphism.
The family $\{ \iota_X : \VA [\Gamma_X] \to \VA [\Gamma] \mid X \in \DD\}$ induces a homomorphism $\hat \Phi : \ast_{X \in \DD} \VA [\Gamma_X] \to \VA [\Gamma]$.
Observe that, for $X,Y \in \DD$ with $X \subset Y$ and $g \in \VA [\Gamma_X]$, we have $\hat \Phi (g) = \hat \Phi ( \varphi_{X,Y} (g))$.
Hence $\hat \Phi$ induces a homomorphism $\Phi : \pi_1 (\GG) \to \VA [\Gamma]$.

Let $F( \SS \cup \TT)$ denote the free group freely generated by $\SS \cup \TT$, and let $\hat \Psi : F (\SS \cup \TT) \to \pi_1 (\GG)$ be the homomorphism defined by $\hat \Psi (\sigma_s) = \psi_{\{ s \}} (\sigma_s)$ and $\hat \Psi (\tau_s) = \psi_{\{ s \}} (\tau_s)$ for all $s \in S$. 
Let $s \in S$. 
Then
\[
\hat \Psi (\tau_s)^2 =
\psi_{\{ s \}} (\tau_s)^2 =
\psi_{\{ s \}} (\tau_s^2) =
\psi_{\{ s \}} (1) = 1\,.
\]
Let $s,t \in S$ distinct with $m_{s,t} \neq \infty$. 
Note that, by definition, $\{s,t\} \in \DD$. 
Then
\begin{gather*}
(\hat \Psi (\sigma_s), \hat \Psi (\sigma_t))_{m_{s,t}} =
( \psi_{\{ s \}} (\sigma_s),  \psi_{\{ t \}} (\sigma_t))_{m_{s,t}} =
( \psi_{\{ s,t \}} \circ \varphi_{\{s\}, \{s,t\}} (\sigma_s), \psi_{\{ s,t \}} \circ \varphi_{\{t\}, \{s,t\}} (\sigma_t))_{m_{s,t}} = \\
\psi_{\{ s,t \}} \big(( \sigma_s, \sigma_t)_{m_{s,t}} \big) =
\psi_{\{ s,t \}} \big(( \sigma_t, \sigma_s)_{m_{s,t}} \big) =
( \psi_{\{ s,t \}} \circ \varphi_{\{t\}, \{s,t\}} (\sigma_t), \psi_{\{ s,t \}} \circ \varphi_{\{s\}, \{s,t\}} (\sigma_s))_{m_{s,t}} =\\
( \psi_{\{ t \}} (\sigma_t),  \psi_{\{ s \}} (\sigma_s))_{m_{s,t}} =
(\hat \Psi (\sigma_t), \hat \Psi (\sigma_s))_{m_{s,t}}\,.
\end{gather*}
We prove in the same way that
\begin{gather*}
(\hat \Psi (\tau_s), \hat \Psi (\tau_t))_{m_{s,t}} =
(\hat \Psi (\tau_t), \hat \Psi (\tau_s))_{m_{s,t}}\,,\\
(\hat \Psi (\tau_s), \hat \Psi (\tau_t))_{m_{s,t}-1} \hat \Psi (\sigma_s) =
\hat \Psi (\sigma_x) (\hat \Psi (\tau_s), \hat \Psi (\tau_t))_{m_{s,t}-1}\,,
\end{gather*}
where $x =s$ if $m_{s,t}$ is even, and $x=t$ if $m_{s,t}$ is odd.  
It follows that $\hat \Psi$ induces a homomorphism $\Psi : \VA [\Gamma] \to \pi_1 (\GG)$.

It is easy to verify that $\Psi \circ \Phi = \id_{\pi_1 (\GG)}$ and $\Phi \circ \Psi = \id_{\VA [\Gamma]}$, hence $\Phi$ and $\Psi$ are isomorphisms.
Finally, by \cite[Theorem 1.1]{GGP26}, for every $X \in \DD$, the homomorphism $\psi_X = \iota_X : \VA [\Gamma_X] \to \VA [\Gamma]$ is injective, hence the complex of groups $\GG_\DD [\Gamma]$ is developable.
\end{proof}

\begin{proof}[Proof of Theorem \ref{thm1_1}]
There is a natural action of $\VA [\Gamma]$ on $\Xi_\DD$, defined on the vertices by $g \cdot v (h, X) = v (gh, X)$.
Note that, if $g \in \VA [\Gamma]$ and $C (h, X, Y)$ is a cube of $\Xi_\DD$, then $g \cdot C (h, X, Y) = C (gh, X,Y)$.
This is clearly an action by isometries.
Furthermore, by Lemma~\ref{lem4_2}, the cube complex $\Xi_\DD$ is locally regular.

By Lemma~\ref{lem3_1}, there is an isomorphism of posets $\hat \DD (\GG_\DD [\Gamma]) \to \CC_\DD = \{ g \VA[\Gamma_X] \mid g \in \VA [\Gamma],\ X \in \DD\}$ which sends a class $[g,X]$ to the coset $g \VA [\Gamma_X]$.
This isomorphism induces an isomorphism of geometric simplicial complexes $U (\GG_\DD [\Gamma]) \to \Xi_\DD = \Delta (\CC_\DD, \subseteq)$ that is equivariant under the action of $\VA [\Gamma]$.
By Theorem~\ref{thm4_3}, it follows that $\Xi_\DD$ is connected and simply connected.
Moreover, since $\DD$ is finite, $\Xi_\DD/\VA[\Gamma] \simeq \Delta (\DD, \subseteq)$ is compact, hence the action of $\VA [\Gamma]$ on $\Xi_\DD$ is cocompact.

Let $g \in \VA [\Gamma]$, and let $C (h, X, Y)$ be a cube of $\Xi_\DD$. 
Applying Lemma ~\ref{lem3_1} we obtain: 
\begin{gather*}
g \cdot C (h, X, Y) = C (h, X, Y)\ \Leftrightarrow\\
\forall Z \in \DD \text{ with } X \subseteq Z \subseteq Y,\ g \cdot v(h, Z) = v(gh, Z) = v (h, Z)\ \Leftrightarrow\\
\forall Z \in \DD \text{ with } X \subseteq Z \subseteq Y,\ h^{-1} g h \in \VA [\Gamma_Z]\ \Leftrightarrow\\
\forall Z \in \DD \text{ with } X \subseteq Z \subseteq Y,\ g \in h \VA [\Gamma_Z] h^{-1}\ \Leftrightarrow\\
g \in h \VA [\Gamma_X] h^{-1}\,.
\end{gather*}
Thus, the action of $\VA[\Gamma]$ on $\Xi_\DD$ is regular, and the stabilizer of the cube $C(h,X,Y)$ is the parabolic subgroup $h\VA[\Gamma_X]h^{-1}$.
\end{proof}


\section{Proof of Theorem \ref{thm1_2}}\label{sec5}

We keep the notation of the previous section. 
So, $\DD$ denotes an elemental complex over $\Gamma$, and $\Xi_\DD$ is the virtual Godelle--Paris complex associated with it.

We begin by recalling the definition of a CAT(0) space.
A \emph{geodesic segment} in a metric space $(E, d)$ is an isometric embedding of an interval $[0, l]$ into $E$.
A metric space $(E, d)$ is said to be \emph{geodesic} if it is complete and every pair of points in $E$ is joined by a geodesic segment.
A \emph{geodesic triangle} $T$ consists of three geodesic segments $\gamma_1, \gamma_2, \gamma_3$ joining three points in $E$.
A \emph{comparison triangle} for $T$ in the Euclidean plane $\E^2$ is a geodesic triangle $\bar T = (\bar \gamma_1, \bar \gamma_2, \bar \gamma_3)$ in $\E^2$ such that the length of $\gamma_i$ is equal to that of $\bar \gamma_i$ for each $i \in \{1, 2, 3\}$.
Note that a comparison triangle always exists and is unique up to isometry.
For a point $x$ on $\gamma_i$, we denote by $\bar x$ the corresponding point on $\bar \gamma_i$.
The triangle $T$ is said to satisfy the \emph{CAT(0) inequality} if, for every pair of points $x, y \in T$, the distance between $x$ and $y$ in $E$ is less or equal to the distance between $\bar x$ and $\bar y$ in $\E^2$.
A geodesic metric space $(E, d)$ is called a \emph{CAT(0) space} if every geodesic triangle in $E$ satisfies the CAT(0) inequality.

CAT(0) spaces were introduced by Gromov \cite{Gro87} in his seminal paper on geometric group theory.
Their study, together with that of discrete groups acting on them, constitute a cornerstones of the field, the case of cubic CAT(0) complexes being particularly important.
We refer to \cite{BH99} for a complete and detailed account of the subject.

Let $E$ be a cube complex, and let $v$ be a vertex of $E$.
The link $\Link_v(E)$ is defined at the beginning of Section~\ref{sec4} as a cell complex whose cells are simplices (such a complex is called a $\Delta$-complex).
If $E$ is locally regular, then $\Link_v(E)$ is a geometric simplicial complex. However, to simplify the proofs and statements that follow, we will in this case identify $\Link_v(E)$ with the abstract simplicial complex that defines it.
In particular, for every vertex $\tilde v$ of $\Xi_\DD$, the link $\Link_{\tilde v}(\Xi_\DD)$ will be regarded as an abstract simplicial complex.

To determine when the cube complex $\Xi_\DD$ is CAT(0), we use the following well-known criterion due to Gromov \cite{Gro87}.
Recall that an abstract simplicial complex $\Sigma$ on a vertex set $V$ is a \emph{flag complex} if, for every finite subset $\AA \subseteq V$, the following equivalence holds: 
\[
\big( \forall x,y \in \AA,\ \{x,y \} \in \Sigma\big)\ \Leftrightarrow\
\AA \in \Sigma\,.
\]

\begin{thm}[Gromov~\cite{Gro87}]\label{thm5_1}
Let $E$ be a connected, simply connected, locally regular, finite-dimensional cube complex.
Then $E$ is CAT(0) if and only if $\Link_v (E)$ is a flag complex for every vertex $v$ of $E$.
\end{thm}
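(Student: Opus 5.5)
The plan is to reduce the global statement to a purely combinatorial statement about links, using three standard ingredients from \cite{BH99}: the Cartan--Hadamard theorem, Berestovskii's link criterion for polyhedral complexes, and Gromov's characterization of CAT(1) all-right spherical complexes.

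First, since $E$ is finite-dimensional, it has only finitely many isometry types of cells, so by \cite{Bri91} it is a complete geodesic metric space. The Cartan--Hadamard theorem says that a complete, connected, simply connected geodesic space is CAT(0) if and only if it is locally CAT(0); the forward direction is trivial. So it suffices to show that $E$ is locally CAT(0) if and only if every vertex link is flag.

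Second, I would invoke Berestovskii's criterion \cite[II.5.2]{BH99}: an $M_\kappa$-polyhedral complex with finitely many shapes is locally CAT($\kappa$) if and only if the link of each vertex, with its induced spherical metric, is CAT(1). Points that are not vertices need no separate treatment, since their links are spherical joins of a vertex link with a round sphere, and such joins are CAT(1) exactly when the factors are. In a cube complex, each vertex link is an \emph{all-right} spherical complex: every simplex is isometric to a spherical simplex all of whose edges have length $\pi/2$. Local regularity guarantees that this link is a genuine simplicial complex, so it can be identified with an abstract simplicial complex. The theorem is therefore reduced to Gromov's lemma: a finite-dimensional all-right spherical simplicial complex $L$ is CAT(1) if and only if it is flag.

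For necessity, suppose $L$ is not flag. I choose a minimal subset $\AA$ with all pairs in $L$ but $\AA\notin L$; then $|\AA|\ge 3$ and every proper subset of $\AA$ is a simplex. Passing to the link of the simplex $\AA\setminus\{a,b,c\}$ for three elements $a,b,c$ of $\AA$ (links of CAT(1) all-right complexes are again CAT(1), and links of flag complexes are flag), I reduce to $|\AA|=3$. There the three edges $ab$, $bc$, $ca$ form a loop of length $3\pi/2<2\pi$. This loop is locally geodesic at each corner, because the two incoming directions are at distance $\ge\pi$ in the vertex link, as the edge they would span is absent. A closed local geodesic of length $<2\pi$ contradicts CAT(1).

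For sufficiency, I argue by induction on dimension. Links of simplices in a flag complex are flag, so by induction all vertex links are CAT(1), and $L$ is locally CAT(1). By \cite[II.5.4]{BH99} it then remains to rule out closed geodesics of length $<2\pi$. This is the main obstacle. I would use the fact that in a flag all-right complex the open star of each vertex $v$ is the open $\pi/2$-ball around $v$ and is convex. A closed geodesic $c$ of length $<2\pi$ enters the open star of some vertex $v$. Following $c$ through consecutive stars and using flagness to see that it cannot leave and re-enter in a short time, one shows that $c$ meets the sphere of radius $\pi/2$ about $v$ in an arc of length $\ge\pi$ on each side. This forces length $\ge 2\pi$, giving the contradiction.
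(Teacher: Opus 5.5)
The paper does not prove this statement. It quotes it from Gromov, with \cite{BH99} as a reference. Your architecture is the standard proof: Cartan--Hadamard, the link condition for complexes with finitely many shapes, and Gromov's lemma that a finite-dimensional all-right spherical complex is CAT(1) if and only if it is flag. Your reductions and your necessity argument are fine.

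The gap is the sufficiency half of Gromov's lemma, which is where the real content lies, and you do not prove it. Two problems:
\begin{itemize}
\item The identification of the open star $\mathrm{st}(v)$ with the ball $B(v,\pi/2)$ holds in every all-right complex. Its convexity, however, is a consequence of CAT(1), which is what you are trying to prove, and you give no independent argument for it.
\item The sentence about leaving and re-entering does not say where flagness is used. In any all-right complex, for a closed local geodesic $c$ and any vertex $v$, each component of $c^{-1}(\mathrm{st}(v))$ is an open arc of length exactly $\pi$. One way to see this: the $v$-coordinate $h(t)$ of $c(t)$ is $C^1$ along $c$ (by the angle condition at break points) and satisfies $h''=-h$. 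Equivalently, use the spherical-cone structure of the star. So re-entry within a short time is excluded by length alone, with no flagness needed. The inside portion also has length $\pi$ in the non-flag all-right triangle, which is a closed geodesic of length $3\pi/2$. What must be controlled is the rest of $c$, and your sketch gives no mechanism for that.
\end{itemize}

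The missing step is a short combinatorial argument.
\begin{itemize}
\item Suppose $\ell(c)<2\pi$, and let $V_c$ be the finite set of vertices whose open stars meet $c$.
\item For each $v\in V_c$, $c^{-1}(\mathrm{st}(v))$ is a single open arc of length $\pi$. Two such arcs in a circle of length $<2\pi$ must intersect.
\item A point of $c$ in $\mathrm{st}(v)\cap\mathrm{st}(w)$ has a support simplex containing both $v$ and $w$. Hence the vertices of $V_c$ are pairwise adjacent.
\item By flagness, $V_c$ spans a simplex $\Sigma$. Each point of $c$ lies in the open star of every vertex of its support, so $c\subseteq\Sigma$.
\item But $\Sigma$ is isometric to $S^n\cap\R_{\ge 0}^{n+1}$. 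This set is convex and lies in an open hemisphere, and its local geodesics are great-circle arcs, so it contains no closed local geodesic.
\end{itemize}
Together with your induction hypothesis (vertex links are CAT(1)) and \cite[II.5.4]{BH99}, this shows that the flag complex is CAT(1).
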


To prove Theorem~\ref{thm1_2}, we follow the same strategy as in \cite[Section 4]{GP12}.
Let $\tilde v = v(g, X)$ be a vertex of $\Xi_\DD$.
We first show that $\Link_{\tilde v} (\Xi_\DD)$ is isomorphic to the join of two simplicial complexes, denoted by $\VArt[\Gamma_X]$ and $\DD^X$, respectively.
We show in Corollary~\ref{corl5_3} that $\VArt[\Gamma_X]$ is always a flag complex.
Thus, $\Link_{\tilde v} (\Xi_\DD)$ is a flag complex if and only if $\DD^X$ is a flag complex.
Then we show that $\DD^X$ is a flag complex for every $X \in \DD$ if and only if $\DD$ itself is a flag complex.
We begin with the definitions of $\DD^X$ and $\VArt [\Gamma]$.

Let $X \in \DD$.
Define $\DD^X$ to be the abstract simplicial complex whose vertex set is $\DD_0^X = \{ y \in S \setminus X \mid X \cup \{ y \} \in \DD \}$, and in which a subset $Y \subseteq \DD_0^X$ is a simplex if and only if $X \cup Y \in \DD$.
Note that $\DD_0^X$ may be empty.
In this case, $\DD^X$ consists of a unique simplex, $\emptyset$.

The \emph{virtual Artin complex} of $\Gamma$, denoted by $\VArt[\Gamma]$, is the abstract simplicial complex defined as follows:
\begin{itemize}
\item
The vertex set of $\VArt [\Gamma]$ is
$
\{ g \VA [\Gamma_{S \setminus \{s\}}] \mid s \in S,\ g \in \VA [\Gamma]\}\,.
$
For convenience, we write $\xi(g,s) = g \VA [\Gamma_{S \setminus \{s\}}]$, which is regarded as a vertex of $\VArt [\Gamma]$.
\item
A finite non-empty set of vertices $\{ \xi (g_0, s_0), \xi (g_1, s_1), \dots, \xi (g_p, s_p)\}$ is a simplex of $\VArt [\Gamma]$ if 
\[
\bigcap_{i=0}^p g_i \VA [\Gamma_{S \setminus \{s_i\}}] \neq \emptyset\,.
\]
\end{itemize}
As usual, the empty set is also regarded as a simplex of $\VArt [\Gamma]$.

The following result has the immediate consequence that $\VArt [\Gamma]$ is a flag complex (see Corollary~\ref{corl5_3}), but it is also of independent interest.

\begin{prop}\label{prop5_2}
Let $g_1, \dots, g_p \in \VA [\Gamma]$ and let $X_1, \dots, X_p \subseteq S$.
Suppose that $g_i \VA [\Gamma_{X_i}] \cap g_j \VA [\Gamma_{X_j}] \neq \emptyset$ for every pair of distinct indices $i,j \in \{1, \dots, p\}$.
Then
\[
\bigcap_{i=1}^p g_i \VA [\Gamma_{X_i}] \neq \emptyset\,.
\]
\end{prop}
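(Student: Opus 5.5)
The plan is to reduce the statement to two Helly-type properties: one for the Coxeter group $W[\Gamma]$, and one for the kure virtual Artin group $\KVA[\Gamma]$. The link between them is the semidirect product decomposition $\VA[\Gamma] \simeq \KVA[\Gamma] \rtimes W[\Gamma]$ given by $\pi_K$ and $\iota_K$. Write $\KVA_X = \KVA[\Gamma] \cap \VA[\Gamma_X]$. Note that $\VA[\Gamma_X] = \KVA_X \rtimes \iota_K(W[\Gamma_X])$, because $\iota_K$ maps $W[\Gamma_X]$ into $\VA[\Gamma_X]$. The case $p \le 2$ is trivial, so I assume $p \ge 3$.

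\emph{Step 1 (Coxeter level).} Set $u_i = \pi_K(g_i)$. Applying $\pi_K$ to the hypothesis shows that the cosets $u_i W[\Gamma_{X_i}]$ pairwise intersect. I will invoke the classical Helly property for standard parabolic cosets in Coxeter groups. One way to see it is that these cosets are residues of the Davis complex, hence convex subcomplexes of a CAT(0) space, and pairwise intersecting convex residues have a common point. This gives $w \in \bigcap_i u_i W[\Gamma_{X_i}]$. Replacing each $g_i$ by $g_i \iota_K(u_i^{-1} w)$ does not change the cosets, so I may assume $\pi_K(g_i) = w$ for all $i$. Translating everything by $\iota_K(w)^{-1}$, I may then assume that every $g_i =: a_i$ lies in $\KVA[\Gamma]$.

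\emph{Step 2 (transfer of the hypothesis to $\KVA$).} Since $a_i VA[\Gamma_{X_i}] \cap a_j \VA[\Gamma_{X_j}] \neq \emptyset$, we can write $a_i^{-1} a_j = x y$ with $x \in \VA[\Gamma_{X_i}]$ and $y \in \VA[\Gamma_{X_j}]$. Applying $\pi_K$ gives $\pi_K(x) = \pi_K(y)^{-1} \in W[\Gamma_{X_i}] \cap W[\Gamma_{X_j}] = W[\Gamma_{X_i \cap X_j}]$. Set $c = \iota_K(\pi_K(x)) \in \VA[\Gamma_{X_i \cap X_j}]$. Replacing $(x, y)$ by $(x c^{-1}, c y)$ gives a factorization with $x \in \KVA_{X_i}$ and $y \in \KVA_{X_j}$. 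Hence the cosets $a_i \KVA_{X_i}$ pairwise intersect inside $\KVA[\Gamma]$. Conversely, any common point of the cosets $a_i \KVA_{X_i}$ is a common point of the original cosets. So it suffices to prove the Helly property for the family of subgroups $\KVA_X$ of $\KVA[\Gamma]$.

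\emph{Step 3 (the kure group), which I expect to be the main obstacle.} I will use the description of $\KVA[\Gamma]$ from \cite{BPT23} and \cite{GGP26}. There, $\KVA[\Gamma]$ is generated by elements $\delta_\beta$ indexed by a $W[\Gamma]$-set of roots, the relations are of a very restricted Artin type, and $\KVA_X$ is the standard parabolic subgroup generated by the $\delta_\beta$ with $\beta$ in the root subsystem of $X$. The parabolic intersection result, Lemma~\ref{lem3_2} lifted to this level, should then be available.

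For this group I would try to prove the Helly property through retractions. For each subset $\Phi'$ of the generating roots, I would look for a homomorphism $\rho_{\Phi'} : \KVA[\Gamma] \to \KVA_{\Phi'}$ that kills the generators outside $\Phi'$ and restricts to the identity on $\KVA_{\Phi'}$. This is the standard tool for right-angled and "large" Artin groups, and it works when the relations only involve pairs of generators.

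Using such retractions, I would run an induction on $p$. Take a common point $z$ of the first $p-1$ cosets, given by induction. Apply the retraction onto $\KVA_{X_p}$ to compare $z$ with $a_p \KVA_{X_p}$. Then correct $z$ by an element of $\bigcap_{i<p} \KVA_{X_i \cap X_p}$, which equals $\KVA_{\bigcap_{i<p}(X_i \cap X_p)}$ by Lemma~\ref{lem3_2}, so that it lands in the last coset.

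If retractions are not available in this generality, my fallback is geometric. I would show that the analogous complex built over $\KVA[\Gamma]$ (the development of the corresponding complex of groups) is a CAT(0) cube complex in which the cosets of the $\KVA_X$ correspond to convex subcomplexes. The Helly property for convex subcomplexes of CAT(0) cube complexes would then finish the proof.
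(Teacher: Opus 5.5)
Your Steps 1 and 2 are correct and coincide with the paper's reduction. The paper also uses the Coxeter Helly property (Proposition~\ref{prop5_4}\,(1)) to arrange $\pi_K(g_i)=1$. It then corrects each pairwise witness by $\iota_K$ of an element of $W[\Gamma_{X_i\cap X_j}]$, which yields pairwise intersecting cosets $k_i\KVA[\Gamma_{X_i}]$. Your heuristic for the Coxeter case is not an argument on its own, since pairwise intersecting convex sets in a CAT(0) space need not share a point (three lines in $\E^2$ already fail). Quoting the classical result, as the paper does, is fine.

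The gap is Step 3. It carries all the remaining content, and none of the three ideas you offer works.
\begin{itemize}
\item[(i)] Homomorphic retractions killing $\delta_\beta$ for $\beta\notin\Phi[\Gamma_X]$ exist only if every label of $\hat\Gamma$ joining $\Phi[\Gamma_X]$ to its complement is even or $\infty$. That ``relations only involve pairs of generators'' is not enough. Already for $m_{s,t}=3$ and $X=\{s\}$ one has $\hat m_{\alpha_s,\alpha_t}=3$, and the relation $\delta_{\alpha_s}\delta_{\alpha_t}\delta_{\alpha_s}=\delta_{\alpha_t}\delta_{\alpha_s}\delta_{\alpha_t}$ would be sent to $\delta_{\alpha_s}^2=\delta_{\alpha_s}$.
\item[(ii)] Even granting retractions, your induction step is vacuous. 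We have $\bigcap_{i<p}\KVA_{X_i\cap X_p}\subseteq\KVA_{X_p}$. Multiplying $z$ on the right, which is the only way to keep it in the first $p-1$ cosets, by an element of $\KVA_{X_p}$ never changes whether $z$ lies in $a_p\KVA_{X_p}$.
\item[(iii)] The fallback is circular. CAT(0)-ness of a cube complex built from these cosets would be checked with Gromov's link condition, and flagness of the links is itself a Helly statement of exactly this kind. That is why the paper argues in the opposite direction, following \cite{GP12}: it deduces Theorem~\ref{thm1_2} from Proposition~\ref{prop5_2} via Corollary~\ref{corl5_3}. You also leave the convexity of the coset subcomplexes unproved.
\end{itemize}

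The missing idea is that Step 3 needs no new argument. By Theorem~\ref{thm5_5}, $\KVA[\Gamma]$ \emph{is} the Artin group $A[\hat\Gamma]$. By Lemma~\ref{lem5_6}, your $\KVA_X=\KVA[\Gamma]\cap\VA[\Gamma_X]=\KVA[\Gamma_X]$ is its standard parabolic subgroup $A[\hat\Gamma_{\Phi[\Gamma_X]}]$; you essentially observe this yourself. So Step 3 is precisely the Helly property for cosets of standard parabolic subgroups in an arbitrary Artin group, namely Proposition~\ref{prop5_4}\,(2), i.e.\ \cite[Lemma 4.7]{GP12}. That result holds with no restriction on the labels, and, as remarked in the paper, it extends to the infinitely generated $\hat\Gamma$. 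With this citation in place of your retraction/CAT(0) plan, your proof becomes exactly the paper's proof.
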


\begin{corl}\label{corl5_3}
$\VArt [\Gamma]$ is a flag complex.
\end{corl}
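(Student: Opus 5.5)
Corollary~\ref{corl5_3} should follow directly from Proposition~\ref{prop5_2}, so the plan is to write the flag condition for $\VArt [\Gamma]$ out explicitly and apply the proposition with $X_i = S \setminus \{s_i\}$.

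Take a finite set of vertices $\AA = \{ \xi (g_0, s_0), \dots, \xi (g_p, s_p) \}$ of $\VArt [\Gamma]$. We must show that $\AA$ is a simplex if and only if every pair of its elements spans a simplex.

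\emph{Easy direction.} If $\AA$ is a simplex, then $\bigcap_{i=0}^p g_i \VA [\Gamma_{S \setminus \{s_i\}}] \neq \emptyset$. Hence each pairwise intersection $g_i \VA [\Gamma_{S \setminus \{s_i\}}] \cap g_j \VA [\Gamma_{S \setminus \{s_j\}}]$ is nonempty. So every pair $\{ \xi(g_i,s_i), \xi(g_j,s_j) \}$, and every singleton, is a simplex. This is also just axiom~(c) of an abstract simplicial complex.

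\emph{Converse.} Suppose every pair $\{ \xi (g_i, s_i), \xi(g_j,s_j) \}$ with $i \neq j$ is a simplex. By definition this means $g_i \VA [\Gamma_{S \setminus \{s_i\}}] \cap g_j \VA [\Gamma_{S \setminus \{s_j\}}] \neq \emptyset$ for all distinct $i,j$. Proposition~\ref{prop5_2}, applied to the elements $g_0, \dots, g_p$ and the subsets $X_i = S \setminus \{s_i\}$, gives $\bigcap_{i=0}^p g_i \VA [\Gamma_{S \setminus \{s_i\}}] \neq \emptyset$. Hence $\AA$ is a simplex.

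\emph{Degenerate cases.} The empty set and singletons are simplices by convention. A coset intersected with itself is nonempty, so the definition does not depend on whether a vertex $\xi(g,s)$ is written with different representatives $g$.

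There is no real obstacle here. The only point to check is that the simplex condition in $\VArt [\Gamma]$ is literally the nonemptiness of an intersection of cosets of standard parabolic subgroups, which is exactly the setting of Proposition~\ref{prop5_2}. All of the substantive work, namely the Helly-type property for cosets of standard parabolic subgroups, is contained in that proposition.
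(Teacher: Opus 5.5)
Your proposal is correct and matches the paper's argument: the paper treats Corollary~\ref{corl5_3} as an immediate consequence of Proposition~\ref{prop5_2}. The nontrivial direction is exactly Proposition~\ref{prop5_2} applied with $X_i = S \setminus \{s_i\}$, and the other direction follows from the subsets-of-simplices axiom.
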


A result analogous to Proposition~\ref{prop5_2}, which will be used in its proof, is known for Coxeter and Artin groups.
The case of Coxeter groups has been known to specialists for a long time and can be found for instance in \cite[Lemma 2.7]{HO21} and \cite[Exercice 3.116]{AB08}.
The case of Artin groups is proved in \cite[Lemma 4.7]{GP12}.

\begin{prop}\label{prop5_4}\leavevmode
\begin{itemize}
\item[(1)]
Let $w_1, \dots, w_p \in W [\Gamma]$ and let $X_1, \dots, X_p \subseteq S$.
Suppose that $w_i W [\Gamma_{X_i}] \cap w_j W [\Gamma_{X_j}] \neq \emptyset$ for every pair of distinct indices $i,j \in \{1, \dots, p\}$.
Then $\bigcap_{i=1}^p w_i W [\Gamma_{X_i}] \neq \emptyset$.
\item[(2)]
Let $g_1, \dots, g_p \in A [\Gamma]$ and let $X_1, \dots, X_p \subseteq S$.
Suppose that $g_i A [\Gamma_{X_i}] \cap g_j A [\Gamma_{X_j}] \neq \emptyset$ for every pair of distinct indices $i,j \in \{1, \dots, p\}$.
Then $\bigcap_{i=1}^p g_i A [\Gamma_{X_i}] \neq \emptyset$.
\end{itemize}
\end{prop}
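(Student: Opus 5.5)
By translation and induction, I would reduce both parts to a statement about three cosets. Then I would prove that statement by a factorization property of standard parabolic subgroups.

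\emph{Reduction to three cosets.} I argue by induction on $p$; the cases $p \le 2$ are trivial. The intersection of two cosets of standard parabolic subgroups is either empty or a coset of a standard parabolic subgroup. Indeed, if $g \in g_i G_{X_i} \cap g_j G_{X_j}$, then this intersection equals $g(G_{X_i} \cap G_{X_j}) = g G_{X_i \cap X_j}$, using $W_{X} \cap W_Y = W_{X \cap Y}$, or $A_X \cap A_Y = A_{X \cap Y}$ from \cite{vLek83}. Suppose the statement holds for three cosets. Then for $p \ge 3$ I replace $C_1 = g_1 G_{X_1}$ and $C_2 = g_2 G_{X_2}$ by $C_1 \cap C_2$. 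This is again a coset of a standard parabolic subgroup, and by the three-coset case it meets every $C_k$ with $k \ge 3$. So we get $p-1$ pairwise intersecting cosets, and induction concludes.

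\emph{The three-coset case.} Translating by $g_1^{-1}$, I may assume $C_1 = G_{X_1}$. Pick $a \in C_1 \cap C_2$ and $b \in C_1 \cap C_3$, so that $C_2 = aG_{X_2}$ and $C_3 = bG_{X_3}$ with $a, b \in G_{X_1}$. The hypothesis $C_2 \cap C_3 \neq \emptyset$ says $a^{-1}b \in G_{X_2}G_{X_3}$, and we also have $a^{-1}b \in G_{X_1}$. It therefore suffices to prove the factorization
\[
G_{X_1} \cap G_{X_2} G_{X_3} = (G_{X_1} \cap G_{X_2})(G_{X_1} \cap G_{X_3})\,.
\]
Granting it, write $a^{-1}b = uv$ with $u \in G_{X_1 \cap X_2}$ and $v \in G_{X_1 \cap X_3}$. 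Then $au = bv^{-1}$ lies in $C_1 \cap C_2 \cap C_3$.

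\emph{Coxeter case.} Let $w \in W_{X_2}W_{X_3}$. By the standard theory of double cosets, since the minimal element of $W_{X_2} \cdot 1 \cdot W_{X_3}$ is $1$, we can write $w = uv$ with $u \in W_{X_2}$, $v \in W_{X_3}$ and $\ell(w) = \ell(u) + \ell(v)$. Concatenating reduced words for $u$ and $v$ then gives a reduced word for $w$. By Tits' solution of the word problem, the set of letters in a reduced word of $w$ does not depend on the word; it is the support of $w$. If moreover $w \in W_{X_1}$, that support lies in $X_1$. Hence $u \in W_{X_1 \cap X_2}$ and $v \in W_{X_1 \cap X_3}$, which proves part~(1).

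\emph{Artin case, the main obstacle.} Part~(2) has no length-additive normal form or support argument in general, so the factorization above is the hard step. I would first try the set-theoretic retractions $\rho_X : A[\Gamma] \to A[\Gamma_X]$ of Godelle--Paris, which satisfy $\rho_X(ug) = u\,\rho_X(g)$ for $u \in A_X[\Gamma]$ and $\rho_X(A_Y[\Gamma]) \subseteq A_{X \cap Y}[\Gamma]$. The hope is to apply $\rho_{X_1}$ to a suitable rewriting of $uv$ and extract the factors $(G_{X_1} \cap G_{X_2})$ and $(G_{X_1} \cap G_{X_3})$. The difficulty is that the equivariance of $\rho_{X_1}$ is on the wrong side for the factorization $w = uv$. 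If this cannot be fixed, I would use the argument of \cite[Lemma 4.7]{GP12}, which establishes exactly part~(2).
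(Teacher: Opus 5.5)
Your proposal is correct, and it does more than the paper, which gives no argument for Proposition~\ref{prop5_4}: it quotes part~(1) from \cite[Lemma 2.7]{HO21} and \cite[Exercise 3.116]{AB08}, and part~(2) from \cite[Lemma 4.7]{GP12}.

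Your reduction is sound: replace $C_1, C_2$ by $C_1\cap C_2 = gG_{X_1\cap X_2}$ and induct, then translate the three-coset case into the identity $G_{X_1}\cap G_{X_2}G_{X_3} = (G_{X_1}\cap G_{X_2})(G_{X_1}\cap G_{X_3})$. Your Coxeter proof of that identity is also sound: it uses the length-additive factorization $w=uv$ and the invariance of the set of letters of reduced words. So for~(1) you have a self-contained proof where the paper has a citation.

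For~(2) you end on the same citation as the paper. However, the retraction idea you abandoned does work once you retract onto $A_{X_2}$ instead of $A_{X_1}$. Let $g\in A_{X_1}$ with $g=uv$, where $u\in A_{X_2}$ and $v\in A_{X_3}$, and set $h=\rho_{X_2}(g)$.
\begin{itemize}
\item Since $g\in A_{X_1}$, you get $h\in A_{X_1\cap X_2}$.
\item Since $u\in A_{X_2}$, left equivariance gives $h=u\,\rho_{X_2}(v)$ with $\rho_{X_2}(v)\in A_{X_2\cap X_3}$. Hence $h^{-1}g=\rho_{X_2}(v)^{-1}v\in A_{X_3}$.
\item Also $h^{-1}g\in A_{X_1}$, so van der Lek's intersection theorem \cite{vLek83} gives $h^{-1}g\in A_{X_1\cap X_3}$.
\end{itemize}
Then $g=h\cdot(h^{-1}g)$ is the required factorization. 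This uses only the two properties of $\rho_X$ you listed. With it, your proof of~(2) is self-contained modulo the existence of the Godelle--Paris retraction.

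The same computation also reproves~(1), using the Coxeter retraction $w\mapsto w_X$, where $w=w_X\bar w$ and $\bar w$ is of minimal length in $W_Xw$. So the retraction formulation treats both parts in the same way.
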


Recall the homomorphism $\pi_K : \VA [\Gamma] \to W [\Gamma]$ defined by $\pi_K (\sigma_s) = 1$ and $\pi_K (\tau_s) = s$ for every $s \in S$.
The kernel of $\pi_K$ is the \emph{kure virtual Artin group} of $\Gamma$, denoted by $\KVA [\Gamma]$.
The homomorphism $\pi_K$ admits a section $\iota_K : W [\Gamma] \to \VA [\Gamma]$, which sends $s$ to $\tau_s$ for every $s \in S$, and we have the semidirect product decomposition $\VA [\Gamma] \simeq \KVA [\Gamma] \rtimes W [\Gamma]$.
The proof of Proposition~\ref{prop5_2} relies heavily on the study of $\KVA [\Gamma]$ carried out in \cite{BPT23,GGP26}.
So, we start by recalling some preliminary results on $\KVA [\Gamma]$ needed for this proof.

Let $\Pi = \{\alpha_s \mid s \in S\}$ be a set in bijection with $S$.
Let $V = \bigoplus_{s \in S} \R \alpha_s$ be the real vector space with basis $\Pi$, and let $\langle \cdot, \cdot \rangle : V \times V \to \R$ be the symmetric bilinear form defined by:
\[
\langle \alpha_s, \alpha_t \rangle = \left\{ \begin{array}{ll}
- 2 \cos (\pi/m_{s,t}) & \text{if } m_{s,t} \neq \infty\,,\\
-2 & \text{if } m_{s,t} = \infty\,.
\end{array} \right.
\]
There exists a faithful linear representation $\rho : W \to \GL (V)$, preserving the bilinear form $\langle \cdot, \cdot \rangle$, defined by:
\[
\rho (s) (v) = v - \langle v, \alpha_s \rangle \alpha_s\,,\quad
\text{for every } v \in V \text{ and } s \in S\,.
\]
For $w \in W [\Gamma]$ and $v \in V$, we will write $w \cdot v$ instead of $\rho(w) (v)$.
The set $\Phi [\Gamma] = \{ w \cdot \alpha_s \mid w \in W[\Gamma],\ s \in S\}$ is called the \emph{root system} of $\Gamma$.

We define a Coxeter matrix $\hat M = (\hat m_{\beta, \gamma})_{\beta, \gamma \in \Phi [\Gamma]}$ on $\Phi [\Gamma]$ as follows.
\begin{itemize}
\item[(a)]
Set $\hat m_{\beta, \beta} = 1$ for every $\beta \in \Phi [\Gamma]$.
\item[(b)]
Let $\beta, \gamma \in \Phi [\Gamma]$ be distinct.
Suppose that there exist $s, t \in S$ and $w \in W [\Gamma]$ such that $w \cdot \alpha_s = \beta$, $w \cdot \alpha_t = \gamma$, and $m_{s,t} \neq \infty$.
Then set $\hat m_{\beta, \gamma} = m_{s,t}$.
\item[(c)]
Set $\hat m_{\beta, \gamma} = \infty$ in the other cases.
\end{itemize}
It is shown in \cite{BPT23} that the definition of $\hat m_{\beta, \gamma}$ does not depend on the choice of $s,t \in S$ and $w \in W [\Gamma]$ in~(b).
The Coxeter graph of $\hat M$ is denoted by $\hat \Gamma$, and the standard generating set of $A [\hat \Gamma]$ is denoted by $\{ \hat \delta_\beta \mid \beta \in \Phi [\Gamma] \}$.

\begin{rem}
It follows from \cite{Deo82} that $\Phi [\Gamma]$ is finite if and only if $W [\Gamma]$ is finite.
In particular, if $W [\Gamma]$ is infinite, then $\hat \Gamma$ is a Coxeter graph with infinitely many vertices.
However, the results on Artin groups with finitely many standard generators that we use extend straightforwardly to Artin groups with infinitely many standard generators.
\end{rem}

Let $\beta \in \Phi [\Gamma]$.
Choose $w \in W [\Gamma]$ and $s \in S$ such that $\beta = w \cdot \alpha_s$, and set $\delta_\beta = \iota_K (w)\, \sigma_s\, \iota_K (w)^{-1} \in \KVA [\Gamma]$.
By \cite[Lemma~2.2]{BPT23}, this definition does not depend on the choice of $w$ and $s$.
The Coxeter graph $\hat \Gamma$ and the group $\KVA [\Gamma]$ are related by the following result, proved in \cite[Theorem~2.3]{BPT23}.

\begin{thm}[Bellingeri--Paris--Thiel \cite{BPT23}]\label{thm5_5}
The map $\{ \hat \delta_\beta \mid \beta \in \Phi [\Gamma]\} \to \{ \delta_\beta \mid \beta \in \Phi [\Gamma]\}$, $\hat \delta_\beta \mapsto \delta_\beta$, induces an isomorphism $\varphi : A [\hat \Gamma] \to \KVA [\Gamma]$.
\end{thm}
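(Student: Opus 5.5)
We prove Theorem~\ref{thm5_5} by building an explicit inverse at the level of semidirect products, rather than running a Reidemeister--Schreier computation inside $\KVA[\Gamma]$. The rough idea is that $\VA[\Gamma] \simeq \KVA[\Gamma] \rtimes W[\Gamma]$ should be matched with a semidirect product $A[\hat\Gamma] \rtimes W[\Gamma]$. In the latter, $W[\Gamma]$ acts on $A[\hat\Gamma]$ by permuting the standard generators through its action on the root system.

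\textbf{Step 1: the map $\varphi$.} We check that $\hat\delta_\beta \mapsto \delta_\beta$ extends to a homomorphism $\varphi : A[\hat\Gamma] \to \KVA[\Gamma]$. Each $\delta_\beta$ lies in $\KVA[\Gamma]$, since $\pi_K(\sigma_s)=1$, and it is well defined by \cite[Lemma~2.2]{BPT23}. Suppose $\beta \neq \gamma$ and $\hat m_{\beta,\gamma} \neq \infty$. Then $\beta = w\cdot\alpha_s$ and $\gamma = w\cdot\alpha_t$ with $m_{s,t} \neq \infty$ and $\hat m_{\beta,\gamma} = m_{s,t}$. Thus $\delta_\beta$ and $\delta_\gamma$ are the conjugates of $\sigma_s$ and $\sigma_t$ by the common element $\iota_K(w)$. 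The braid relation of length $m_{s,t}$ between them therefore follows from (V2).

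\textbf{Step 2: the inverse map.} Next we let $W[\Gamma]$ act on $A[\hat\Gamma]$ by $w\cdot\hat\delta_\beta = \hat\delta_{w\cdot\beta}$. This is an action by automorphisms because $\hat m_{w\cdot\beta, w\cdot\gamma} = \hat m_{\beta,\gamma}$, which is immediate from the definition of $\hat M$ and its independence of choices. We form $G = A[\hat\Gamma]\rtimes W[\Gamma]$ and define $\Theta : \VA[\Gamma] \to G$ by $\sigma_s \mapsto \hat\delta_{\alpha_s}$ and $\tau_s \mapsto s$. The relations are checked one by one.
\begin{itemize}
\item[(V1)] This holds because $s^2 = 1$ in $W[\Gamma]$.
\item[(V2)] The $\tau$-relations hold in $W[\Gamma]$. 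The $\sigma$-relations hold because $\hat m_{\alpha_s,\alpha_t} = m_{s,t}$ when $m_{s,t}\neq\infty$.
\item[(V3)] Put $u = (s,t)_{m_{s,t}-1} \in W[\Gamma]$. In $G$, relation (V3) becomes $\hat\delta_{u\cdot\alpha_s} = \hat\delta_{\alpha_x}$. So it reduces to the identity $u\cdot\alpha_s = \alpha_x$, where $x=s$ if $m_{s,t}$ is even and $x=t$ if $m_{s,t}$ is odd. This is the standard computation in the rank-two root system of the dihedral group of order $2m_{s,t}$: the longest element sends $\alpha_s$ to $-\alpha_{x'}$, and removing one letter gives $u\cdot\alpha_s = \alpha_x$.
\end{itemize}
We expect this dihedral root computation, together with the $W[\Gamma]$-invariance of $\hat M$, to be the only genuinely non-formal points.

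\textbf{Step 3: conclusion.} Define $\Lambda : G \to \VA[\Gamma]$ by $(a,w) \mapsto \varphi(a)\,\iota_K(w)$. This is a homomorphism because $\iota_K(w)\,\delta_\beta\,\iota_K(w)^{-1} = \delta_{w\cdot\beta}$, which again follows from the independence of the choices in the definition of $\delta_\beta$. On generators we have $\Lambda\circ\Theta(\sigma_s) = \delta_{\alpha_s} = \sigma_s$ and $\Lambda\circ\Theta(\tau_s) = \tau_s$. In the other direction, $\Theta\circ\Lambda(s) = s$, and
\[
\Theta\circ\Lambda(\hat\delta_\beta) = \Theta\big(\iota_K(w)\sigma_s\iota_K(w)^{-1}\big) = w\,\hat\delta_{\alpha_s}\,w^{-1} = \hat\delta_\beta\,.
\]
Hence $\Theta$ and $\Lambda$ are mutually inverse isomorphisms. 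They commute with the projections to $W[\Gamma]$, namely $\pi_K$ on $\VA[\Gamma]$ and the quotient map on $G$. They therefore restrict to an isomorphism between the kernels $A[\hat\Gamma]$ and $\KVA[\Gamma]$, and this restriction is exactly $\varphi$.
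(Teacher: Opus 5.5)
Your argument is correct. The paper does not prove Theorem~\ref{thm5_5} itself. It imports the result from \cite{BPT23}, together with the two facts you rely on: $\hat m_{\beta,\gamma}$ and $\delta_\beta$ do not depend on the chosen $w,s,t$. Granting these, every verification in your semidirect-product argument checks out.
The $W[\Gamma]$-invariance of $\hat M$ follows by sending a witness $(v,s,t)$ for $(\beta,\gamma)$ to the witness $(wv,s,t)$ for $(w\cdot\beta,w\cdot\gamma)$. For (V3), the paper's convention is that $(a,b)_m$ ends with the letter $b$. So $(t,s)_m=(s,t)_{m-1}\,s$, hence $u=w_0s$, where $w_0$ is the longest element of $W[\Gamma_{\{s,t\}}]$. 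This gives $u\cdot\alpha_s=-w_0\cdot\alpha_s=\alpha_x$. Finally, $\Lambda$ is a homomorphism precisely because $\iota_K(w)\,\delta_\beta\,\iota_K(w)^{-1}=\delta_{w\cdot\beta}$. Since $\pi_K\circ\Lambda$ is the projection $G\to W[\Gamma]$, restricting to the kernels is legitimate.

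It is worth seeing where the substance lies. Independence of $\hat m_{\beta,\gamma}$ is elementary, because $\langle\beta,\gamma\rangle=\langle\alpha_s,\alpha_t\rangle=-2\cos(\pi/m_{s,t})$ determines $m_{s,t}$. Hence your $\Theta$ exists unconditionally, and it is clearly surjective. Moreover, $\Theta\big(\iota_K(w)\,\sigma_s\,\iota_K(w)^{-1}\big)=\hat\delta_{w\cdot\alpha_s}$, so injectivity of $\Theta$ is \emph{equivalent} to the well-definedness of $\delta_\beta$. Your Steps~1--3 are therefore the correct formal part of the proof. The real content sits in \cite[Lemma~2.2]{BPT23}, which needs a description of the elements $w$ with $w\cdot\alpha_s=\alpha_t$, combined with (V3). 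Citing that lemma is legitimate here, because the statement as formulated in the paper presupposes it.
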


From now on, we identify $A [\hat \Gamma]$ with $\KVA [\Gamma]$ via the above isomorphism.

Let $X \subseteq S$.
We identify $V_X=\bigoplus_{x\in X} \R \alpha_x$ with the corresponding vector subspace of $V = \bigoplus_{s \in S} \R \alpha_s$, and, under this identification, we regard
$\Phi[\Gamma_X]$ as a subset of $\Phi[\Gamma]$.
The following result is a direct consequence of \cite[Lemma~3.8]{GGP26}.

\begin{lem}[Gálvez-Mateos--Gavazzi--Paris \cite{GGP26}]\label{lem5_6}
Let $X \subseteq S$.
Then the canonical embedding $\VA[\Gamma_X] \hookrightarrow \VA [\Gamma]$ restricts to an isomorphism from $\KVA [\Gamma_X] = A [\widehat{\Gamma_X}]$ to $A [\hat \Gamma_{\Phi[X]}]$, the subgroup of $\KVA [\Gamma] = A [\hat \Gamma]$ generated by $\{\delta_\beta \mid \beta \in \Phi [\Gamma_X]\}$.
\end{lem}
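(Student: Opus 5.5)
The plan is to obtain the statement from the functoriality of the semidirect product decomposition, together with Theorem~\ref{thm5_5} applied twice (once to $\Gamma_X$, once to $\Gamma$) and the injectivity of $\VA[\Gamma_X] \to \VA[\Gamma]$ from \cite[Theorem~1.1]{GGP26}.

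\textbf{Step 1: the inclusion is compatible with $\pi_K$ and $\iota_K$.} Denote by $j : \VA[\Gamma_X] \to \VA[\Gamma]$ the canonical embedding, and by $j_W : W[\Gamma_X] \to W[\Gamma]$ the corresponding Coxeter inclusion, which is injective by \cite{Bou68}. Checking on generators, we have $\pi_K \circ j = j_W \circ \pi_K^X$ and $j \circ \iota_K^X = \iota_K \circ j_W$. Since $j_W$ is injective, the first identity gives $j(\KVA[\Gamma_X]) \subseteq \KVA[\Gamma]$. Moreover $j$ is injective by \cite[Theorem~1.1]{GGP26}, hence so is its restriction to $\KVA[\Gamma_X]$.

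\textbf{Step 2: compatibility of roots.} The bilinear form on $V_X$ is the restriction of $\langle\cdot,\cdot\rangle$, so for $x \in X$ and $v \in V_X$ the reflection $\rho_X(x)(v)$ equals $\rho(x)(v)$. Consequently, for $w \in W[\Gamma_X]$ and $x \in X$, the root $w\cdot\alpha_x$ computed in $V_X$ coincides with $j_W(w)\cdot\alpha_x$ computed in $V$. This justifies regarding $\Phi[\Gamma_X]$ as a subset of $\Phi[\Gamma]$, and the map is injective because $V_X \subseteq V$.

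For $\beta = w\cdot\alpha_x \in \Phi[\Gamma_X]$, Step~1 then gives
\[
j(\delta^X_\beta) = j(\iota^X_K(w))\,\sigma_x\, j(\iota^X_K(w))^{-1} = \iota_K(w)\,\sigma_x\,\iota_K(w)^{-1} = \delta_\beta\,.
\]
This is independent of choices by \cite[Lemma~2.2]{BPT23}.

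\textbf{Step 3: identification of the image.} By Theorem~\ref{thm5_5} applied to $\Gamma_X$, the group $\KVA[\Gamma_X]$ is generated by the $\delta^X_\beta$ with $\beta \in \Phi[\Gamma_X]$. By Step~2, the image $j(\KVA[\Gamma_X])$ is therefore exactly the subgroup of $\KVA[\Gamma] = A[\hat\Gamma]$ generated by $\{\delta_\beta \mid \beta \in \Phi[\Gamma_X]\}$. Together with the injectivity from Step~1, this gives the desired isomorphism onto that subgroup.

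\textbf{Main obstacle: the labels.} The step I expect to need the most care is matching this subgroup with the standard parabolic subgroup $A[\hat\Gamma_{\Phi[X]}]$, which requires the labels of $\widehat{\Gamma_X}$ to agree with those of $\hat\Gamma$ restricted to $\Phi[\Gamma_X]$.
\begin{itemize}
\item If $\hat m^X_{\beta,\gamma} = m_{s,t}$ is finite, witnessed by some $w \in W[\Gamma_X]$, the same witness shows $\hat m_{\beta,\gamma} = m_{s,t}$.
\item If $\hat m^X_{\beta,\gamma} = \infty$ but $\hat m_{\beta,\gamma} = m < \infty$, argue by contradiction. By van der Lek's theorem \cite{vLek83}, extended to infinitely many generators as in the Remark above, the subgroup of $A[\hat\Gamma]$ generated by $\delta_\beta, \delta_\gamma$ is the dihedral Artin group on these two generators, so $(\delta_\beta,\delta_\gamma)_m = (\delta_\gamma,\delta_\beta)_m$ holds in $\KVA[\Gamma]$. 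Pulling back along the injective map $j$, the same relation holds between $\delta^X_\beta$ and $\delta^X_\gamma$ in $A[\widehat{\Gamma_X}]$. But there these two generate a free group by van der Lek again, which is a contradiction.
\end{itemize}
Hence the labels agree, and the image is precisely $A[\hat\Gamma_{\Phi[X]}]$.
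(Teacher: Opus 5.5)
Your argument is correct, but it does not follow the paper's route. The paper gives no proof of its own: it declares the lemma a direct consequence of \cite[Lemma~3.8]{GGP26}. That lemma presumably supplies the combinatorial fact that $\widehat{\Gamma_X}$ coincides, labels included, with the full subgraph of $\hat\Gamma$ spanned by $\Phi[\Gamma_X]$. Van der Lek's theorem then identifies the subgroup generated by $\{\delta_\beta \mid \beta \in \Phi[\Gamma_X]\}$ with $A[\widehat{\Gamma_X}]$, and injectivity on the kernel factor comes out as an \emph{output}.

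You run the logic in the opposite direction. You take injectivity of $\VA[\Gamma_X] \to \VA[\Gamma]$ (\cite[Theorem~1.1]{GGP26}) as an \emph{input}. Your Steps 1--3 already prove the lemma exactly as stated, via Theorem~\ref{thm5_5} and $j(\delta^X_\beta) = \delta_\beta$, because $A[\hat\Gamma_{\Phi[X]}]$ is \emph{defined} there as the generated subgroup. So the ``main obstacle'' is not needed for the statement itself. Your label comparison is a correct bonus: it recovers the agreement of $\widehat{\Gamma_X}$ with the full subgraph from injectivity plus van der Lek, rather than from root-system combinatorics.

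Within this paper your route is legitimate, since \cite[Theorem~1.1]{GGP26} is used as a black box throughout. Its advantage is brevity, relying only on results the paper already invokes. The trade-off is that it could not replace Lemma~3.8 inside \cite{GGP26} itself, where injectivity is presumably derived from the label comparison; it repackages rather than reproves the cited content.

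Two small over-justifications:
\begin{itemize}
\item The inclusion $j(\KVA[\Gamma_X]) \subseteq \KVA[\Gamma]$ needs only $\pi_K \circ j = j_W \circ \pi_K^X$, not injectivity of $j_W$. Injectivity of $j_W$ is what gives $j^{-1}(\KVA[\Gamma]) = \KVA[\Gamma_X]$.
\item The relation $(\delta_\beta,\delta_\gamma)_m = (\delta_\gamma,\delta_\beta)_m$ in $A[\hat\Gamma]$ is a defining relation. Van der Lek is needed only on the $A[\widehat{\Gamma_X}]$ side, to see that $\delta^X_\beta, \delta^X_\gamma$ generate a free group of rank $2$.
\end{itemize}
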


\begin{proof}[Proof of Proposition~\ref{prop5_2}]
Let $w_i = \pi_K (g_i)$ for each $1 \le i \le p$.
Let $i,j \in \{1, \dots, p\}$ be distinct.
Applying $\pi_K$ to the nonempty intersection $g_i \VA [\Gamma_{X_i}] \cap g_j \VA [\Gamma_{X_j}] \neq \emptyset$, we obtain $w_i W [\Gamma_{X_i}] \cap w_j W[\Gamma_{X_j}] \neq \emptyset$.
By Proposition~\ref{prop5_4}\,(1), it follows that $\bigcap_{i=1}^p w_i W [\Gamma_{X_i}] \neq \emptyset$.
Choose $v \in \bigcap_{i=1}^p w_i W [\Gamma_{X_i}]$.
Let $i \in \{1, \dots, p\}$.
Since $v \in w_i W [\Gamma_{X_i}]$, there exists $v_i \in W [\Gamma_{X_i}]$ such that $w_i = v v_i$.
Set $h_i = g_i \, \iota_K (v_i)^{-1}$.
Since $v_i \in W [\Gamma_{X_i}]$, we have $\iota_K (v_i) \in \VA [\Gamma_{X_i}]$, hence $h_i \VA [\Gamma_{X_i}] = g_i \VA [\Gamma_{X_i}]$.
Moreover, $\pi_K (h_i) = w_i v_i^{-1} = v$.

For each $i \in \{1, \dots, p\}$, set $k_i = \iota_K (v)^{-1} \, h_i$.
Since $\pi_K (k_i) = 1$, we have $k_i \in \KVA [\Gamma] = A [\hat \Gamma]$.
Let $i,j \in \{1, \dots, p\}$ be distinct.
Then
\begin{gather*}
k_i \VA [\Gamma_{X_i}] \cap k_j \VA [\Gamma_{X_j}] =
\iota_K (v)^{-1} \big( h_i \VA [\Gamma_{X_i}] \cap h_j \VA [\Gamma_{X_j}] \big) =\\
\iota_K (v)^{-1} \big( g_i \VA [\Gamma_{X_i}] \cap g_j \VA [\Gamma_{X_j}] \big)
\neq \emptyset\,.
\end{gather*}
Choose $a_{i,j} \in k_i \VA [\Gamma_{X_i}] \cap k_j \VA [\Gamma_{X_j}]$.
Set $u_{i,j} = \pi_K (a_{i,j}) \in W [\Gamma_{X_i}] \cap W [\Gamma_{X_j}]$.
It is well known that $W [\Gamma_{X_i}] \cap W [\Gamma_{X_j}] = W [\Gamma_{X_i \cap X_j}]$ (see, for example, \cite[Theorem 4.16]{Dav08}), hence $\iota_K (u_{i,j}) \in \VA [\Gamma_{X_i \cap X_j}] = \VA [\Gamma_{X_i}] \cap \VA [\Gamma_{X_j}]$.
Define $b_{i,j} = a_{i,j} \, \iota_K (u_{i,j})^{-1}$.
We have $\pi_K (b_{i,j})=1$ and $b_{i,j} \in k_i \VA [\Gamma_{X_i}] \cap k_j \VA [\Gamma_{X_j}]$, hence
\[
b_{i,j} \in k_i \KVA [\Gamma_{X_i}] \cap k_j \KVA [\Gamma_{X_j}] =
k_i A [\hat \Gamma_{\Phi[\Gamma_{X_i}]}] \cap k_j A [\hat \Gamma_{\Phi[\Gamma_{X_j}]}]\,.
\]

By Proposition~\ref{prop5_4}\,(2), it follows that $\bigcap_{i=1}^p k_i \KVA [\Gamma_{X_i}] \neq \emptyset$, hence $\bigcap_{i=1}^p k_i \VA [\Gamma_{X_i}] \neq \emptyset$.
Choose $k \in \bigcap_{i=1}^p k_i \VA [\Gamma_{X_i}]$, and set $h = \iota_K(v)\, k$.
Then $h \in \bigcap_{i=1}^p g_i \VA [\Gamma_{X_i}]$.
\end{proof}

Let $\Sigma_1$ and $\Sigma_2$ be two abstract simplicial complexes.
Let $V_1$ (resp. $V_2$) denote the vertex set of $\Sigma_1$ (resp. $\Sigma_2$).
Recall that the \emph{join} of $\Sigma_1$ and $\Sigma_2$ is the abstract simplicial complex $\Sigma_1 \ast \Sigma_2$ defined by:
\begin{itemize}
\item
$V_1 \sqcup V_2$ is the vertex set of $\Sigma_1 \ast \Sigma_2$;
\item
a finite subset $\AA$ of $V_1 \sqcup V_2$ is a simplex of $\Sigma_1 \ast \Sigma_2$ if and only if $\AA \cap V_1$ is a simplex of $\Sigma_1$ and $\AA \cap V_2$ is a simplex of $\Sigma_2$.
\end{itemize}
Recall that $\emptyset$ is always regarded as a simplex of an abstract simplicial complex (although it is not a simplex of its geometric realization).
Consequently, $\Sigma_1$ (resp. $\Sigma_2$) is the full subcomplex of $\Sigma_1 \ast \Sigma_2$ spanned by $V_1$ (resp. $V_2$).
It is easily verified that $\Sigma_1 \ast \Sigma_2$ is a flag complex if and only if both $\Sigma_1$ and $\Sigma_2$ are flag complexes.

The following lemma is the final ingredient needed to prove Theorem \ref{thm1_2}.

\begin{lem}\label{lem5_7}
Let $\tilde v = v(g,X)$ be a vertex of $\Xi_\DD$.
Then $\Link_{\tilde v} (\Xi_\DD) \simeq \VArt [\Gamma_X] \ast \DD^X$.
\end{lem}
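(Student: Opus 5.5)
By equivariance of the action (Theorem~\ref{thm1_1}), we may assume $g=1$, so $\tilde v = v(1,X)$. By Lemma~\ref{lem4_2}, $\Link_{\tilde v}(\Xi_\DD)$ is a simplicial complex. Its vertex set is $\LL_{\tilde v}$, and a finite subset $\EE \subseteq \LL_{\tilde v}$ is a simplex exactly when some cube contains $\tilde v$ and meets $\LL_{\tilde v}$ in $\EE$. We split $\LL_{\tilde v}$ into two parts:
\[
\LL^- = \{ v(h, X\setminus\{x\}) \mid x \in X,\ h \in \VA[\Gamma_X]\}, \qquad \LL^+ = \{ v(1, X \cup\{y\}) \mid y \in \DD_0^X\}.
\]
The map $v(1,X\cup\{y\}) \mapsto y$ is a bijection $\LL^+ \to \DD_0^X$. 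The map $v(h,X\setminus\{x\}) \mapsto h\VA[\Gamma_{X\setminus\{x\}}]$, with $h \in \VA[\Gamma_X]$ and the coset taken inside $\VA[\Gamma_X]=\VA[\Gamma_{X}]$, is a bijection $\LL^-\to$ vertices of $\VArt[\Gamma_X]$. Injectivity and well-definedness of both maps come from Lemma~\ref{lem3_1}. Since $\VA[\Gamma_X]$ is identified with $\VA[\Gamma_X]$ as a standard parabolic, the vertices $\xi(h,x)$ of $\VArt[\Gamma_X]$ are exactly these cosets.

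Next we show that the combined bijection $\LL_{\tilde v}\to V(\VArt[\Gamma_X])\sqcup \DD_0^X$ carries simplices to simplices, in both directions. Take a finite subset
\[
\EE=\{v(h_i,X\setminus\{x_i\})\}_{i\le p}\cup\{v(1,X\cup\{y_j\})\}_{j\le q}.
\]
Suppose first that $\EE$ is a simplex, so some cube $C(k,Z,Z')$ realizes it. By Lemma~\ref{lem4_1}, $Z=X\setminus\{x_1,\dots,x_p\}$ and $Z'=X\cup\{y_1,\dots,y_q\}\in\DD$. The second equality gives $\{y_j\}\in \DD^X$. From $\tilde v\in C(k,Z,Z')$ we get $k\in\VA[\Gamma_X]$. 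Since $v(h_i,X\setminus\{x_i\})=v(k,X\setminus\{x_i\})$, we get $k\in\bigcap_i h_i\VA[\Gamma_{X\setminus\{x_i\}}]$, so the $x$-part is a simplex of $\VArt[\Gamma_X]$. (Distinct vertices have distinct $x_i$, which is automatic here since the intersection is nonempty.)

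Conversely, suppose $X\cup\{y_1,\dots,y_q\}\in\DD$ and there is $k\in\bigcap_i h_i\VA[\Gamma_{X\setminus\{x_i\}}]$. Then $k\in\VA[\Gamma_X]$, and the $x_i$ are pairwise distinct: if $x_i=x_j$, the two cosets are equal because they intersect. Consider the cube $C(k,X\setminus\{x_1,\dots,x_p\},X\cup\{y_1,\dots,y_q\})$. Every $Z$ between the two ends lies in $\DD$, being a subset of $X\cup\{y_j\}$. This cube contains $\tilde v=v(k,X)$, and its neighbours of $\tilde v$ are exactly the vertices $v(k,X\setminus\{x_i\})=v(h_i,X\setminus\{x_i\})$ and $v(k,X\cup\{y_j\})=v(1,X\cup\{y_j\})$. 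The last equality uses $k\in\VA[\Gamma_X]\subseteq \VA[\Gamma_{X\cup\{y_j\}}]$. So $\EE$ is a simplex.

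Since the simplicial conditions split as a condition on the $\LL^-$-part and a condition on the $\LL^+$-part, this is precisely the join structure $\VArt[\Gamma_X]\ast\DD^X$. The main point requiring care is the first implication: the cube realizing $\EE$ may carry a different $k$ and different coset representatives $h_i$. Lemma~\ref{lem4_1} together with Lemma~\ref{lem3_1} handles this, because membership of $v(h_i,\cdot)$ in $C(k,\cdot,\cdot)$ forces $k^{-1}h_i\in\VA[\Gamma_{X\setminus\{x_i\}}]$. The empty set and edge cases ($p=0$ or $q=0$) are covered by the same argument.
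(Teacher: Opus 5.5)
Your proof is correct and follows essentially the same route as the paper: the same bijection $\LL_{\tilde v} \to \XX \sqcup \DD_0^X$, Lemma~\ref{lem4_1} for the forward direction, and the explicit cube $C(k, X\setminus\{x_1,\dots,x_p\}, X \cup \{y_1,\dots,y_q\})$ for the converse. The differences are cosmetic: you normalize to $g=1$ by equivariance rather than sending $v(h, X\setminus\{x\})$ to $\xi(g^{-1}h,x)$, and you explicitly check that the $x_i$ are pairwise distinct, which the paper leaves implicit.
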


\begin{proof}
We begin by recalling the description of $\Link_{\tilde v} (\Xi_\DD)$ given in Section~\ref{sec4} (see the proof of Lemma~\ref{lem4_2}).
We denote by $\LL_{\tilde v}$ the set of vertices $\tilde u$ of $\Xi_\DD$ that are joined to $\tilde v$ by an edge, that is, for which there exists a $1$-cube of $\Xi_\DD$ with endpoints $\tilde u$ and $\tilde v$.
The vertex set of $\Link_{\tilde v} (\Xi_\DD)$ is $\LL_{\tilde v}$.
Let $\EE$ be a non-empty finite subset of $\LL_{\tilde v}$.
Then $\EE$ is a simplex of $\Link_{\tilde v} (\Xi_\DD)$ if and only if there exists a cube $C = C (k, Z, Z')$ of $\Xi_\DD$ such that $\tilde v \in C$ and $C \cap \LL_{\tilde v} = \EE$.
Furthermore, as usual, the empty set is also a simplex of $\Link_{\tilde v} (\Xi_\DD)$.
Recall also that a vertex $\tilde u$ of $\Xi_\DD$ belongs to $\LL_{\tilde v}$ if and only if it is of one of the following two forms:
\begin{itemize}
\item
$\tilde u = v(h, X \setminus \{ x\})$, where $x \in X$ and $g^{-1} h \in \VA [\Gamma_X]$;
\item
$\tilde u = v(g, X \cup \{ y\})$, where $y \in S \setminus X$ and $X \cup \{ y \} \in \DD$.
\end{itemize}

Recall that $\XX = \{ \xi (h,x) \mid h \in \VA [\Gamma_X],\ x \in X\}$ is the vertex set of $\VArt [\Gamma_X]$, and that $\DD_0^X = \{ y \in S \setminus X \mid X \cup \{ y\} \in \DD\}$ is the vertex set of $\DD^X$.
We define a bijection $\Phi : \LL_{\tilde v} \to \XX \sqcup \DD_0^X$ as follows.
\begin{itemize}
\item
If $x \in X$ and $g^{-1} h \in \VA [\Gamma_X]$, then $\Phi (v(h, X \setminus \{ x\})) = \xi (g^{-1}h , x) \in \XX$.
\item
If $y \in S \setminus X$ and $X \cup \{ y \} \in \DD$, then $\Phi (v (g,X \cup \{y\})) = y \in \DD_0^X$.
\end{itemize}
To prove Lemma~\ref{lem5_7}, it suffices to show that the image under $\Phi$ of every simplex of $\Link_{\tilde v} (\Xi_\DD)$ is a simplex of $\VArt[\Gamma_X] \ast \DD^X$, and that the image under $\Phi^{-1}$ of every simplex of $\VArt[\Gamma_X] \ast \DD^X$ is a simplex of $\Link_{\tilde v} (\Xi_\DD)$.

Let $\EE$ be a simplex of $\Link_{\tilde v} (\Xi_\DD)$.
Write 
\[
\EE = \{ v(h_1, X \setminus \{x_1\}), \dots, v(h_p, X \setminus \{x_p\}), v(g, X \cup \{ y_1\}), \dots, v(g, X \cup \{ y_q\}) \}\,,
\]
where $x_1, \dots, x_p \in X$, $g^{-1} h_i \in \VA [\Gamma_X]$ for every $1 \le i \le p$, $y_1, \dots, y_q \in S \setminus X$, and $X \cup \{y_j\} \in \DD$ for every $1 \le j \le q$.
Let $C = C(k, Z, Z')$ be the cube of $\Xi_\DD$ such that $\tilde v \in C$ and $C \cap \LL_{\tilde v} = \EE$.
Since $k \VA [\Gamma_{X \setminus \{ x_i\}}] = h_i \VA [\Gamma_{X \setminus \{ x_i\}}]$ for every $1 \le i \le p$, we have $g^{-1} k \in \bigcap_{i=1}^p g^{-1} h_i \VA [\Gamma_{X \setminus \{x_i\}}]$, hence $\bigcap_{i=1}^p g^{-1} h_i \VA [\Gamma_{X \setminus \{x_i\}}] \neq \emptyset$.
So, $\AA = \{ \xi(g^{-1}h_1, x_1), \dots, \xi (g^{-1}h_p, x_p)\}$ is a simplex of $\VArt [\Gamma_X]$.
Moreover, $\BB = \{y_1, \dots, y_q\}$ is a simplex of $\DD^X$ because, by Lemma~\ref{lem4_1}, $Z' = X \cup \BB \in \DD$.
It follows that $\Phi (\EE) = \AA \cup \BB$ is a simplex of $\VArt[\Gamma_X] \ast \DD^X$.

Let $\AA = \{ \xi (h_1, x_1), \dots, \xi (h_p, x_p) \}$ be a simplex of $\VArt [\Gamma_X]$, and let $\BB = \{y_1, \dots, y_q\}$ be a simplex of $\DD^X$.
Assume first that $\AA \neq \emptyset$.
By definition, $\bigcap_{i=1}^p h_i \VA [\Gamma_{X \setminus \{x_i\}}] \neq \emptyset$.
Choose $k \in \bigcap_{i=1}^p h_i \VA [\Gamma_{X \setminus \{x_i\}}]$, and set $C = C(gk,Z,Z')$, where $Z = X \setminus \{x_1, \dots, x_p\}$ and $Z' = X \cup \BB = X \cup \{y_1, \dots, y_q\}$.
The set $C$ is indeed a cube of $\Xi_\DD$, since $Z' \in \DD$ by the definition of $\DD^X$.
Since $k \in \VA [\Gamma_X]$, we have $\tilde v = v(g,X) = v(gk,X) \in C$.
Let $i \in \{1, \dots, p\}$.
Since $k^{-1}h_i \in \VA [\Gamma_{X \setminus \{x_i\}}]$, we have $gk \VA [\Gamma_{X \setminus \{x_i\}}] = gh_i \VA [\Gamma_{X \setminus \{x_i\}}]$, hence $v(gh_i, X \setminus \{x_i\}) = v(gk, X \setminus \{x_i\})$. 
On the other hand, for each $1 \le j \le q$, we have $k \in \VA [\Gamma_X] \subseteq \VA [\Gamma_{X \cup \{y_j\}}]$, hence $v(g, X \cup \{y_j\}) =  v(gk, X \cup \{y_j\})$.
It follows that $C \cap \LL_{\tilde v} =  \{ v(gh_1, X \setminus \{x_1\}), \dots, v(gh_p, X \setminus \{x_p\}), v(g, X \cup \{ y_1\}), \dots, v(g, X \cup \{ y_q\}) \} = \Phi^{-1} (\AA \cup \BB)$, hence $\Phi^{-1} (\AA \cup \BB)$ is a simplex of $\Link_{\tilde v} (\Xi_\DD)$.

Assume now that $\AA = \emptyset$.
Let $C= C (g,X,Z')$, where $Z' = X \cup \BB$.
Since $\BB$ is a simplex of $\DD^X$, we have $Z' \in \DD$, hence $C$ is indeed a cube of $\Xi_\DD$.
It is easy to see that $\tilde v = v(g,X) \in C$ and that $C \cap \LL_{\tilde v} = \{ v(g, X \cup \{ y_1\}), \dots, v(g, X \cup \{ y_q\}) \} = \Phi^{-1} (\BB)$, hence $\Phi^{-1} (\BB)$ is a simplex of $\Link_{\tilde v} (\Xi_\DD)$.
\end{proof}

\begin{proof}[Proof of Theorem \ref{thm1_2}]
By Theorem \ref{thm1_1}, the cube complex $\Xi_\DD$ is connected, simply connected, and locally regular.
By Theorem~\ref{thm5_1}, it follows that $\Xi_\DD$ is CAT(0) if and only if $\Link_{\tilde v} (\Xi_\DD)$ is a flag complex for every vertex $\tilde v$ of $\Xi_\DD$.
Let $\tilde v = v(g,X)$ be a vertex of $\Xi_\DD$.
By Lemma~\ref{lem5_7},  $\Link_{\tilde v} (\Xi_\DD) \simeq \VArt [\Gamma_X] \ast \DD^X$, and, by Corollary~\ref{corl5_3},  $\VArt [\Gamma_X]$ is a flag complex, hence $\Link_{\tilde v} (\Xi_\DD)$ is a flag complex if and only if $\DD^X$ is a flag complex.
So, $\Xi_\DD$ is a CAT(0) cube complex if and only if $\DD^X$ is a flag complex for every $X \in \DD$.

If $\DD^X$ is a flag complex for every $X \in \DD$, then, in particular, $\DD = \DD^\emptyset$ is a flag complex.
Conversely, suppose that $\DD$ is a flag complex.
Let $X \in \DD$.
Let $\AA$ be a finite subset of $\DD_0^X = \{ y \in S \setminus X \mid X \cup \{ y \} \in \DD\}$ such that, for all distinct $y_1, y_2 \in \AA$, the set $\{y_1, y_2\}$ is a simplex of $\DD^X$, that is, $X \cup \{y_1, y_2\}$ is a simplex of $\DD$.
Then $\{z_1, z_2\} \in \DD$ for all distinct $z_1, z_2 \in X \cup \AA$.
Since $\DD$ is a flag complex, this implies that $X \cup \AA$ is a simplex of $\DD$, and therefore that $\AA$ is a simplex of $\DD^X$.
So, $\DD^X$ is a flag complex.
We conclude that $\DD^X$ is a flag complex for every $X \in \DD$ if and only if $\DD$ is a flag complex.
\end{proof}



\end{document}